\newtheorem{theorem}{Theorem}
\newtheorem{conjecture}{Conjecture}
\newtheorem{lemma}{Lemma}
\newtheorem{definition}{Definition}
\newcommand{\red}[1]{\textcolor{red}{#1}}
\title{Extremal permutations in routing cycles}
\date{\today}
\author{Junhua He$^{\dagger}$\and Louis A. Valentin$^{\ddagger}$  \and Xiaoyan Yin$^{\S}$ \and Gexin Yu$^{\star}$ }
\thanks{$^{\dagger}$ School of Mathematical Sciences, University of Electronic Science and Technology of China, Chengdu, Sichuan, 611731, China. Research support in part by the fundamental research funds for Chinese universities under Grant ZYGX2012J120.}
\thanks{$^{\ddagger}$ Department of Mathematics, College of William and Mary, Williamsburg, VA 23185. }
\thanks{$^{\S}$ School of Mathematics and Statistics, Xidian University, Xi'an, Shaanxi,  710071, China.  Research supported in part by National Science Foundation of China (61373174) and the Fundamental Research Funds for the Central Universities (K5051370007).}
\thanks{$^{\star}$ Corresponding author. Email:  gyu@wm.edu.   Department of Mathematics, College of William and Mary, Williamsburg, VA 23185. The research of the last author is supported in part by the NSA grant  H98230-16-1-0316.}
\begin{document}
\maketitle

\begin{abstract}
Let $G$ be a graph whose vertices are labeled $1,\ldots,n$, and $\pi$ be a permutation on $[n]:=\{1,2,\cdots, n\}$.  A pebble $p_i$ that is initially placed at the vertex $i$ has destination $\pi(i)$ for each $i\in [n]$. At each step,  we choose a matching and swap the two pebbles on each of the edges.  Let $rt(G, \pi)$, the routing number for $\pi$, be the minimum number of steps necessary for the pebbles to reach their destinations.

Li, Lu, and Yang proved that $rt(C_n, \pi)\le n-1$ for every permutation $\pi$ on the $n$-cycle $C_n$ and conjectured that for $n\geq 5$, if $rt(C_n, \pi) = n-1$, then $\pi = 23\cdots n1$ or its inverse. By a computer search, they showed that the conjecture holds for $n<8$. We prove in this paper that the conjecture holds for all even $n\ge 6$.
\end{abstract}

\section{Introduction}
Routing problems occur in many areas of computer science. Sorting a list involves routing each element to the proper location. Communication across a network involves routing messages through appropriate intermediaries. Message passing between multiprocessors requires the routing of signals to correct processors.

In each case, one would like the routing to be done as quickly as possible. Let us consider the routing model introduced by Alon, Chung, and Graham \cite{ACG94} in 1994. Let $G = (V,E)$ be a graph whose vertices are labeled as $1, \ldots, n$.  For a permutation $\pi$ on $[n]$, a pebble $p_i$, which has destination $\pi(i)$, is placed at $i$ for each $i\in [n]$.  For example, let $\pi=342165=\begin{pmatrix}1&2&3&4&5&6\\3&4&2&1&6&5\end{pmatrix}$, then the destination of $(p_1, p_2, p_3, p_4, p_5, p_6)$ are $(3,4,2,1,6,5)$.

We wish to move pebbles to their destinations.  To do so, at each round we select a matching of $G$ and swap the pebbles at the endpoints of each edge, and repeat rounds until all pebbles are in place.

Let $rt(G, \pi)$ denote the minimum number of rounds necessary to route $\pi$ on $G$. Then, the {\bf routing number} of $G$ is defined as:
\[
rt(G) = \max_{\pi}\  \{ rt(G, \pi) \}
\]

Note that when each matching in the routing process consists of one edge, the routing number corresponds to the diameter of Cayley graph generated by (cyclically) adjacent transpositions.  The diameter problem of Cayley graphs has a rich research literature, and we refer the reader to the book~\cite{FLRTV09} for a comprehensive survey.


Very few results are known for the exact values of the routing numbers of graphs.
\begin{theorem}[Alon, Chung, and Graham~\cite{ACG94}] The following are true:
\begin{enumerate}
\item $rt(P_n)=n$, $rt(K_n) = 2$ and  $rt(K_{n,n}) = 4$;
\item  $rt(G) \ge diam(G)$ and $rt(G) \ge \frac{2}{|C|} \min \{|A|,|B|\}$, where $diam(G)$ is the diameter of $G$ and $C$ is a set that cuts $G$ into parts $A$ and $B$;
\item  $rt(G) \le rt(H)$ and $rt(T_n) < 3n$, where $H$ is a spanning subgraph of $G$ and $T_n$ is a tree on $n$ vertices;
\item $rt(G_1 \times G_2) \le 2rt(G_1)+rt(G_2)$, and $n \le  rt(Q_n) \le 2n-1$.
\end{enumerate}
\end{theorem}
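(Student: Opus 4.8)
The plan is to treat the four parts separately, since each rests on its own mechanism; I describe the line of attack for each and then single out the hard points.

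For part (1): the equality $rt(K_n)=2$ follows from two facts — a single round realizes exactly the involutions (each involution is a partial matching, and $K_n$ contains every matching), and every permutation of $[n]$ is a product of two involutions (factor each cycle as a product of two ``reflections'' and multiply over cycles) — combined with the observation that a $3$-cycle, available once $n\ge 3$, is not an involution. For $rt(K_{n,n})=4$, with sides $X,Y$ of size $n$: the lower bound is obtained by exhibiting a permutation that no $3$-round scheme can realize; a natural starting point is a permutation swapping the two sides by bijections $f\colon X\to Y$ and $g\colon Y\to X$ with $g\neq f^{-1}$, which already cannot be routed in two rounds since a short case analysis on which vertex is matched in which round forces $g=f^{-1}$, and which is then modified (and a further, more involved case analysis given) to rule out three rounds as well. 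For the upper bound one must produce an explicit $4$-round scheme; the clean route is to write an arbitrary permutation of $X\sqcup Y$ as a product of two permutations each realizable by $K_{n,n}$ in two rounds, where a two-round-realizable permutation is constrained so that (up to the side-swap) its restrictions to $X$ and to $Y$ are conjugate, so the real content is showing every $\pi$ admits such a two-factor decomposition. I expect the $K_{n,n}$ case, in both directions, to be the most delicate part.

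Parts (2) and (3) are short. For $rt(G)\ge\mathrm{diam}(G)$: in one round a pebble moves to a neighbour or stays put, so its distance to its target drops by at most $1$ per round, and a pebble sent across a diametral pair needs $\mathrm{diam}(G)$ rounds. For the cut bound, with $V=A\sqcup B\sqcup C$, no $A$--$B$ edge, and (say) $|A|\le|B|$, choose $\pi$ so that every pebble bound for $A$ starts in $B$ and every pebble starting in $A$ is bound for $B$; then $2\min(|A|,|B|)$ pebbles must each occupy a vertex of $C$ at some round, while at most $|C|$ pebbles lie on $C$ in any one round, giving at least $2\min(|A|,|B|)/|C|$ rounds. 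The inequality $rt(G)\le rt(H)$ for spanning $H\subseteq G$ is immediate since any matching of $H$ is a matching of $G$. For $rt(T_n)<3n$ I would divide and conquer at a centroid $c$ of $T$ (the components $T_1,\dots,T_d$ of $T-c$ each having at most $n/2$ vertices): first route every pebble into the component containing its destination — this is bottlenecked at the single vertex $c$ and can be pipelined at one pebble per round — then recurse inside the $T_i$, which, being vertex-disjoint, recurse in parallel, so the round count obeys $R(n)\le an+R(\lceil n/2\rceil)$ and telescopes to at most $2an$. Pinning down the constant $a$ of the first phase so that $2a\le 3$ (or arranging the recursion differently) is the main obstacle of this part.

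Part (4). For $rt(G_1\times G_2)\le 2\,rt(G_1)+rt(G_2)$ I would run the classical three-phase mesh routing on the grid $V(G_1)\times V(G_2)$: phase 1 permutes inside each $G_1$-fibre; phase 2 permutes inside each $G_2$-fibre to place every pebble on its correct $G_2$-coordinate; phase 3 permutes inside each $G_1$-fibre to place every pebble on its correct $G_1$-coordinate. The one nontrivial ingredient is choosing phase 1 so that afterwards each $G_2$-fibre holds exactly one pebble bound for each $G_2$-coordinate; this follows by forming the matrix $m_{v,w}$ counting pebbles currently in $G_2$-fibre $v$ that are bound for $G_2$-fibre $w$ (all of whose row and column sums equal $|V(G_1)|$), applying the Birkhoff--von Neumann theorem to decompose it as a sum of $|V(G_1)|$ permutation matrices, and using the index of the matrix a given pebble belongs to as its target $G_1$-coordinate for phase 1; one checks each resulting within-fibre assignment is a genuine permutation. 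Phases 1 and 3 cost $rt(G_1)$ rounds each, phase 2 costs $rt(G_2)$, and all fibres move in parallel, giving the bound. Finally, since $Q_n=K_2\times Q_{n-1}$ and $rt(K_2)=1$, this yields $rt(Q_n)\le 2+rt(Q_{n-1})$, hence $rt(Q_n)\le 2n-1$ by induction from $rt(Q_1)=1$, while $rt(Q_n)\ge n$ is just $rt(Q_n)\ge\mathrm{diam}(Q_n)=n$ by part (2).
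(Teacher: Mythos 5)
This theorem is stated in the paper purely as background, quoted from Alon, Chung, and Graham~\cite{ACG94} with no proof given, so there is no in-paper argument to compare your proposal against; I can only assess the proposal on its own terms. Your treatments of part (2), of $rt(G)\le rt(H)$ in part (3), and of part (4) are essentially the standard arguments and are sound: the diameter and cut-counting bounds are correct as you state them, and the three-phase routing with a Birkhoff--von Neumann (equivalently, regular-bipartite edge-colouring) step, followed by $Q_n=K_2\times Q_{n-1}$, $rt(Q_n)\le 2\,rt(K_2)+rt(Q_{n-1})$ and the diameter lower bound, is exactly the classical proof. One small slip there: the matrix you decompose should be indexed by $V(G_2)\times V(G_2)$ (current $G_2$-coordinate versus destination $G_2$-coordinate); only then are the row and column sums $|V(G_1)|$ as you claim, whereas ``pebbles currently in $G_2$-fibre $v$'' reads as indexing by $V(G_1)$, which would give sums $|V(G_2)|$. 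Likewise $rt(K_n)=2$ via the factorization of any permutation into two involutions is fine.

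The genuine gaps are in $rt(K_{n,n})=4$ and in $rt(T_n)<3n$, and in both cases the missing piece is the actual content of the result. For $K_{n,n}$ you prove neither direction: the lower bound requires exhibiting a specific permutation and showing no three-round scheme routes it, and ``a further, more involved case analysis'' is a placeholder rather than an argument (ruling out three rounds is substantially harder than ruling out two, since intermediate rounds need not respect the sides); the upper bound rests on the unproved claim that every permutation of $X\sqcup Y$ factors into two permutations each routable in two rounds, and producing that factorization is precisely where a Hall/K\"onig-type matching argument must be supplied. For the tree bound, the recursion $R(n)\le an+R(\lceil n/2\rceil)$ is not justified: the centroid $c$ participates in at most one swap per round, and delivering the correct crossing pebbles to the neighbours of $c$ in the right order requires routing inside the components simultaneously with the crossing phase, so the claim that phase 1 costs $an$ rounds with $a\le 3/2$ \emph{and} can be cleanly separated from the recursive phase is exactly the difficulty; as sketched, the constant you would get is not controlled, and this pipelining analysis is the heart of the Alon--Chung--Graham (and later Zhang) tree arguments rather than a detail to be pinned down afterwards.
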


Zhang~\cite{Zheng99} improved the above bound on trees, showing that $rt(T_n) \le \left \lfloor \frac{3n}{2} \right \rfloor + O(\log n)$.

 Li, Lu, and Yang~\cite{LLY10} showed that $n+1 \le rt(Q_n) \le 2n-2$, improving both the previous upper and lower bounds on hypercubes. Among other results, they showed that $rt(C_n)= n-1$, and made the following conjecture.

\begin{conjecture}[Li, Lu, Yang~\cite{LLY10}] \label{conj}
For $n \ge 5$, if $rt(C_n, \pi) = n-1$, then $\pi$ is the rotation $23\cdots n1$ or its inverse $n12\ldots (n-1)$.
\end{conjecture}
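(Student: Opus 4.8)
The plan is to prove the conjecture in contrapositive form: \emph{if $\pi$ is not the rotation $\rho:=(1\,2\,\cdots\,n)$ nor its inverse $\rho^{-1}$, then $rt(C_n,\pi)\le n-2$.} Since $rt(C_n,\pi)\le n-1$ always holds by Li--Lu--Yang, and $\rho^{\pm1}$ attain this bound (which is how $rt(C_n)=n-1$ is realized), this characterizes the extremal permutations and settles the conjecture. So the whole task is to produce, for every $\pi\ne\rho^{\pm1}$, a routing that beats the worst case by one round.

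First I would normalize $\pi$ using the available symmetries: relabelling $C_n$ by an element of the dihedral group conjugates $\pi$ without changing $rt(C_n,\pi)$, and running a routing backwards in time replaces $\pi$ by $\pi^{-1}$; so we may put a fixed point, or a least-displaced vertex, at a convenient position and assume $\pi$ is oriented toward $\rho$ rather than $\rho^{-1}$. Then I would set up two reductions, both valid for every $n$. \emph{Cut-and-sort:} if $\pi$ maps some proper arc $S$ of $C_n$ onto itself (in particular whenever $\pi$ has a fixed point), then $\pi$ restricts to independent sub-permutations on the two complementary paths $S$ and $C_n\setminus S$, which can be routed simultaneously, and the two ``bridge'' edges joining the arcs give just enough extra room to finish one round early, so $rt(C_n,\pi)\le n-2$. \emph{One-round reduction:} if some matching $M$ of $C_n$, applied in the first round, produces a permutation $\pi'$ with $rt(C_n,\pi')\le n-3$, then $rt(C_n,\pi)\le n-2$; in practice one chooses $M$ so that applying it creates a fixed point, or breaks a long orbit into a configuration fixing a proper arc, thereby reducing to the cut-and-sort case.

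The proof then proceeds by a case analysis organized by how close $\pi$ is to $\rho^{\pm1}$. If $\pi$ fixes a proper arc, cut-and-sort finishes it. If not, but $\pi$ has a pebble whose displacement has absolute value at least $2$ (or a pair of pebbles ``badly out of cyclic order''), there is enough slack to apply the one-round reduction. What remains — and this is the step I expect to be the real obstacle — is the rigid case where $\pi$ fixes no proper arc and every displacement is a coherent $\pm1$: concretely the pure shifts $\rho^{k}$ with $2\le k\le n-2$ (here one expects $rt(C_n,\rho^{k})=\max\{k,n-k\}\le n-2$), and copies of $\rho$ altered in just one or two coordinates. In this regime all the generic reductions miss by exactly one round, so one must exhibit a bespoke $(n-2)$-round schedule that unwinds the ``twist'' of $\pi$ one step faster than the $n-1$ rounds genuinely needed for $\rho$ itself. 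For even $n$ this is workable because $C_n$ is the edge-disjoint union of the two perfect matchings $M_1=\{v_1v_2,v_3v_4,\dots\}$ and $M_2=\{v_2v_3,v_4v_5,\dots\}$, and a carefully timed alternation of $M_1$ and $M_2$ (occasionally dropping one edge) performs a rotation-like sweep flexible enough to absorb the small perturbation within $n-2$ rounds; this is exactly why the conjecture can be settled unconditionally for all even $n$. For odd $n$, every matching leaves a vertex idle, the alternating sweep no longer rotates cleanly, and keeping the last spare round under control in the near-rotation case is precisely where the accounting collapses — that is the step where this approach currently breaks down, and the reason the unconditional statement for odd $n$ (checked by computer only for $n\le 7$) remains the hard part.
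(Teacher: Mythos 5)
Your overall strategy (contrapositive: every $\pi$ other than $(12\cdots n)^{\pm 1}$ routes in at most $n-2$ rounds, with the even-$n$ perfect-matching alternation as the engine) matches the paper's goal, but the two reductions you lean on are exactly where the difficulty lives, and neither is established. The \emph{cut-and-sort} step is not a freebie: if $\pi$ fixes a proper arc $S$, routing $S$ and its complement as paths in parallel only gives $rt(C_n,\pi)\le\max\{rt(P_{|S|}),rt(P_{n-|S|})\}$, and since $rt(P_m)=m$ this is $n-1$ in the critical case of a single fixed point with a worst-case permutation on the remaining path. Your claim that the two bridge edges ``give just enough extra room to finish one round early'' is precisely the assertion that needs proof; the bridge edges join vertices whose pebbles already belong to their own arcs, so using them moves pebbles out of their destination arc, and no schedule exploiting them is exhibited. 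The \emph{one-round reduction} is vacuous as stated: it requires knowing $rt(C_n,\pi')\le n-3$ for the permutation after one matching, which is a strictly stronger bound than the theorem you are proving, and you set up no induction or auxiliary invariant that could supply it. Finally, the ``rigid'' near-rotation case (rotation altered in one or two coordinates) is acknowledged but only promised a ``bespoke $(n-2)$-round schedule''; none is constructed, even for even $n$. So the proposal is a plan with the three hard steps left open, not a proof.

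For contrast, the paper's proof of the even case does not argue by arc-invariance or by peeling off one round. It introduces a quantitative framework: minimized disbursements (spins summing to zero with $s(p_i)-s(p_j)\le n$), a transitive order $\succ$ on pebbles recording which pairs must swap, the odd-even routing algorithm on the two perfect matchings, and the ``window'' of a pebble. A counting identity (equations (1)--(2)) then shows that any permutation needing $n-1$ rounds must contain a pebble whose window has one of three very restricted shapes; block-decomposition lemmas pin down the spins in each shape, and in each case either $\pi$ is forced to be a rotation (whence the Rotation Lemma $rt(C_n,\rho^q)=n-|q|$ identifies $(12\cdots n)^{\pm1}$ as the only culprits) or an explicit odd-even schedule, sometimes after flipping a pair of spins, finishes in $n-2$ rounds. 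If you want to salvage your outline, the place to invest is a tool playing the role of the paper's window classification: some lemma that converts ``$\pi$ needs $n-1$ rounds'' into a rigid structural statement, rather than trying to save a round generically.
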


The conjecture does not hold for $n=4$; the permutation that transposes two non-adjacent vertices and fixes the other two serves as a counterexample. They verified the conjecture for $n<8$ through a computer search.  The conjecture is kind of counter-intuitive:  the permutations on the cycle requiring most time to route are the ones that each pebble is very close to (actually at distance one from) its destination.

In this article, we give a proof of the conjecture when $n$ is even.

\begin{theorem} \label{thework}
For even $n \ge 6$, if $rt(C_n, \pi) = n-1$, then $\pi$ is the rotation $23\cdots n1$ or its inverse.
\end{theorem}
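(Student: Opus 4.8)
The plan is to establish the contrapositive in quantitative form: for even $n\ge 6$, if $\pi$ is a permutation on $C_n$ with $\pi\notin\{\rho,\rho^{-1}\}$, where $\rho=(123\cdots n)$, then $rt(C_n,\pi)\le n-2$. Together with $rt(C_n)=n-1$ (Li--Lu--Yang) this is exactly the theorem. Throughout I would use the symmetries that preserve $rt(C_n,\cdot)$ and stabilise $\{\rho,\rho^{-1}\}$: conjugating $\pi$ by a rotation of the vertex labels, conjugating by a reflection (which interchanges $\rho$ and $\rho^{-1}$), and replacing $\pi$ by $\pi^{-1}$ (run a routing backwards). Hence I may normalise, e.g., the location of a fixed point.

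The first reduction isolates the permutations in which every pebble is at cycle-distance at most $1$ from its destination. If $\tau=\pi^{-1}$ satisfies $\tau(j)\in\{j-1,j,j+1\}$ for all $j$, a short propagation argument shows that either one ``shift chain'' wraps all the way around the cycle (so $\pi=\rho$ or $\pi=\rho^{-1}$) or else $\pi$ is a product of pairwise disjoint edges of $C_n$, i.e.\ an adjacent-transposition matching; in the latter case $\pi$ is realised in a single round, so $rt(C_n,\pi)\le 1$. Therefore from now on I may assume that either $\pi$ has a fixed point, or some pebble is at distance $\ge 2$ from its destination.

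The main dichotomy is on fixed points. Suppose $\pi$ fixes $v_1$ (after normalising). Then $\pi$ restricts to a permutation $\sigma$ of the path $v_2v_3\cdots v_n$, and routing $\sigma$ there while leaving $v_1$'s pebble in place gives $rt(C_n,\pi)\le rt(P_{n-1},\sigma)$. Invoking the fact that on a path $P_m$ one has $rt(P_m,\sigma)\le m-1$ unless $\sigma$ is order-reversing, we get $rt(C_n,\pi)\le(n-1)-1=n-2$ except when $\sigma$ reverses $v_2,\dots,v_n$. In that exceptional case $\pi$ is the reflection of $C_n$ through $v_1$ and the antipodal vertex $v_{n/2+1}$, both of which it fixes; cutting $C_n$ at these two fixed vertices yields two paths on $n/2+1$ vertices on which the two interior order-reversals can be routed in parallel, so $rt(C_n,\pi)\le n/2+1\le n-2$ since $n\ge 6$. (This is one place where parity enters: for odd $n$ the reflection through a single vertex fixes only that vertex, and the argument degrades.)

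This leaves the heart of the matter: $\pi$ has no fixed point and carries some pebble a distance $\ge 2$. The strategy I would pursue is to play one carefully chosen matching $M_1$ so that the resulting permutation $\pi_1$ satisfies $rt(C_n,\pi_1)\le n-3$, giving $rt(C_n,\pi)\le n-2$; the bound on $\pi_1$ would come from the two mechanisms above (showing $M_1$ can be chosen to create a fixed point, or to produce a configuration that cuts to a short path), together with induction on even $n$ with base case $n=6$, which Li--Lu--Yang verified by computer. The even cycle is well suited to this because its two perfect matchings $M_1,M_2$ behave transparently --- alternating them drives the pebbles on odd-labelled vertices clockwise and those on even-labelled vertices counterclockwise, two steps per pair of rounds --- so a long-travelling pebble can be advanced for a round or two while a positive fraction of the others are simultaneously delivered, after which the configuration genuinely breaks apart. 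I expect the real difficulty, and the bulk of the casework, to be twofold: the powers $\rho^{k}$ with $2\le k\le n-2$, which must be shown to route in $\le n-2$ rounds directly, and the configurations that are only a small perturbation of $\rho^{\pm1}$ (for instance $\rho$ with the images of two pebbles interchanged), where the obvious first move leaves something close to a ``half-rotation'' on an $(n/2)$-cycle and extracting the final round requires an explicit, somewhat delicate routing.
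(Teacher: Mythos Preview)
Your proposal has the right top-level shape but contains genuine gaps that prevent it from going through. The central one is the ``no fixed point'' case, which is the entire content of the theorem and which you leave as a strategy rather than an argument. Your plan is to play one matching $M_1$ and then bound $rt(C_n,\pi_1)\le n-3$, appealing to induction on even $n$. But applying a matching keeps you on the same cycle $C_n$, so an inductive hypothesis about $C_{n-2}$ tells you nothing; and even if the full statement for $C_n$ were already available, it would only give $rt(C_n,\pi_1)\le n-2$ when $\pi_1\notin\{\rho,\rho^{-1}\}$, not the $n-3$ you need. You correctly identify the hard configurations (powers $\rho^k$ and small perturbations of $\rho^{\pm1}$), but nothing in the outline supplies a mechanism for saving the extra round there. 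Two smaller issues: the claim that on a path $P_m$ only the full reversal needs $m$ rounds is exactly the path analogue of the theorem you are proving and is not obvious---it needs a proof or a reference; and your treatment of the reflection is wrong as stated, since when $\pi$ fixes $v_1$ and order-reverses $v_2,\dots,v_n$, the pebble at $v_2$ must reach $v_n$, which lies on the \emph{other} arc after cutting at $v_1$ and $v_{n/2+1}$, so $\pi$ does not restrict to two path-permutations.

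For comparison, the paper does not recurse or reduce to paths at all. It assigns each pebble a signed displacement (a ``spin''), from which it extracts a partial order $\succ$ on pebbles, and for each pebble $A$ studies its ``window'' of pebbles comparable to $A$. A counting argument on the odd--even routing then shows that if some pebble genuinely requires $n-1$ rounds, its window must have one of three rigid extremal shapes; two of these shapes are routed explicitly in $n-2$ rounds (via a block decomposition and, in one case, a preliminary spin flip), while the third forces $\pi$ to be a rotation. This structural approach handles the hard no-fixed-point permutations uniformly, which is precisely where your plan runs out.
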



Some new tools are introduced in the proof, beyond the ideas from \cite{ALSY11} by Albert, Li, Strang, and the last author.  These tools, introduced in Section~\ref{tools},  enable us to describe precisely the swapping process of each pebble, and determine the pebbles that need $n-1$ steps to route, see Section~\ref{extreme} for detail. In Section~\ref{solution}, we show that the only permutations that need $n-1$ steps to route must be the two permutations in the theorem.

\section{Important notion and tools}\label{tools}

Let $G = C_n$ with even $n\ge 6$ and label the vertices of $C_n$ as $1, 2, \dots , n$ in the clockwise order.   Let the clockwise direction be  the positive direction and counter clockwise be the negative direction.  In the rest of the paper, when list pebbles or sets of pebbles in a row, we always think them to be in the clockwise order on the cycle.

\subsection{The Odd-Even Routing Algorithm}

An {\em odd-even sort} or {\em odd-even transposition sort} is a classic sorting algorithm (see \cite{K98}) used to sort a list of numbers on parallel processors. To describe this algorithm,  we may place the $n$ numbers to be sorted on the vertices of the path $P_n=v_1v_2\ldots v_n$. An edge $e = v_iv_{i+1}$ is  odd if and only if $i$ is odd.   At each odd step (respectively, even step) of the routing process, we select a matching consisting of odd edges (respectively, even edges) whose two numbers have the wrong order and swap the numbers on the endpoints.

We apply a similar algorithm on even cycles, whose edges can be partitioned into two perfect matchings. We shall call edges in one perfect matching to be even and the others to be odd.  Thus, once the parity of one edge is specified, the parity of all the edges is determined.    During the odd steps (respectively, even steps) we choose a matching consisting of odd edges (respectively, even edges) whose two pebbles are comparable (defined in subsequent subsections).  If an edge $e$ is chosen to be an odd edge, we would call this algorithm the {\em odd-even routing algorithm with odd edge $e$}.


\subsection{Spins and Disbursements}

There are exactly two paths for  pebble $p_i$ to reach its destination, by traveling either in the positive or negative direction. Let $d^+(i, j)$ denote the distance from the vertex $i$ to the vertex $j$ along the positive direction.  Then if $i<j$, then $d^+(i,j)=j-i$ if $i<j$, and $d^+(i,j)=j-i+n$ if $i>j$.  For simplicity, for pebbles $p_i$ and $p_j$, we define $d^+(p_i,p_j)=d^+(i, j)$, and if $p_i$ and $p_j$ are on the endpoints of an edge, we sometimes call the edge $p_ip_j$.

Consider a routing process of a permutation $\pi$ on $C_n$ with pebble set $P=\{p_1, \ldots, p_n\}$. For  each pebble $p_i$, let $s(p_i)$, the {\em spin} of $p_i$, represent the displacement for $p_i$ to reach its destination from its current position. So, $s(p_i) \in \{ d^+(i, \pi(i)), d^+(i, \pi(i))-n \}$. Note that the spin of a pebble changes with its movement.

A sequence $B =(s(p_1), s(p_2), \ldots, s(p_n))$ is called a {\em valid disbursement} of $\pi$ if the spins can be realized by a routing process on $\pi$.    Not all combinations of spins give valid disbursements. The following lemma gives a necessary and sufficient condition for a set of spins to be a valid disbursement.

\begin{lemma}
Let $B=(s(p_1), s(p_2), \ldots, s(p_n))$ be an assignment of the spins to the pebbles.  It is a valid disbursement if and only if $\sum_{i=1}^n s(p_i)= 0$.
\end{lemma}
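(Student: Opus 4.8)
The plan is to prove the two directions separately: necessity via a conserved quantity, and sufficiency via an explicit routing built in the universal cover of the cycle.

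For necessity, I would show that $\sum_{p\in P} s(p)$ does not change during a routing that realizes a disbursement. Fix such a routing and track each pebble's spin as an \emph{integer} that changes by $\pm 1$ with every move, never reduced modulo $n$. In any round a matching is chosen, and across each selected edge $v_jv_{j+1}$ the pebble at $v_j$ is carried one step in the positive direction (so its spin, being the remaining signed displacement to its destination, drops by exactly $1$) while the pebble at $v_{j+1}$ is carried one step in the negative direction (so its spin rises by exactly $1$). Hence each swap, and therefore each round, leaves $\sum_{p\in P}s(p)$ unchanged, so this sum has the same value at the start of the routing as at the end. At the end every pebble sits at its destination, so every spin is $0$; therefore the initial disbursement satisfies $\sum_{p\in P}s(p)=0$.

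For sufficiency, assume $\sum_{p\in P}s(p)=0$ and pass to the line $\Z$ as the universal cover of $C_n=\Z/n\Z$. Place the pebble $p_i$ at the integer $\pi(i)\in\{1,\dots,n\}$ and declare its lifted target to be $\pi(i)+s(p_i)$, the lift of $v_i$ singled out by the spin $s(p_i)$. Since $\pi(i)+s(p_i)\equiv i\pmod n$, putting one pebble at every integer and extending $n$-equivariantly yields a bijection $f\colon\Z\to\Z$ with $f(x+n)=f(x)+n$, where the pebble at $x$ has target $f(x)$ and the pebble originally at $\pi(i)$ has target $\pi(i)+s(p_i)$. The hypothesis says exactly that $\sum_{x=1}^{n}\bigl(f(x)-x\bigr)=\sum_{i}s(p_i)=0$, so the $n$-periodic function $x\mapsto f(x)-x$ has mean $0$ and is therefore bounded, say $|f(x)-x|\le C$; consequently every inversion $x<y$ with $f(x)>f(y)$ has $y-x<2C$, and so there are only finitely many inversions per period. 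Now route by an $n$-periodic bubble sort: as long as some pebble is not yet at its target the arrangement is unsorted, so there is an adjacent pair $x,x+1$ with (target of pebble at $x$) $>$ (target of pebble at $x+1$); swap the pebbles across every edge $(x+kn,\,x+1+kn)$, $k\in\Z$, at once. This is an $n$-periodic matching of the line, hence descends to a single edge of $C_n$, and it strictly decreases the number of inversions per period, so after finitely many rounds every pebble reaches its lifted target. Projecting this sequence of matchings to $C_n$ gives a routing of $\pi$ in which the net signed displacement of each $p_i$ equals $s(p_i)$, i.e. a routing realizing the disbursement.

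The main point to pin down is the meaning of ``valid disbursement'': the clean formulation is that $B$ is realizable precisely when some routing of $\pi$ gives each $p_i$ net signed displacement exactly $s(p_i)$ (pebbles are allowed to backtrack, which already matters for the transposition of two opposite vertices of $C_4$). With this convention fixed, necessity is the conservation argument above, and the only genuine work in sufficiency is the finiteness of inversions per period, which is exactly where the hypothesis $\sum_{p\in P}s(p)=0$ is used; everything else is routine bookkeeping between $C_n$ and its cover.
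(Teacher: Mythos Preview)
Your necessity argument is correct and is essentially the paper's.

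Your sufficiency argument has a genuine gap. You write: ``as long as some pebble is not yet at its target the arrangement is unsorted, so there is an adjacent pair $x,x+1$ with (target of pebble at $x$) $>$ (target of pebble at $x+1$).'' This implication is false on $\mathbb{Z}$: the configuration $\sigma(x)=x+c$ has no adjacent inversion, yet for $c\ne 0$ no pebble sits at its lifted target. Your periodic bubble sort halts precisely at such a shift (an increasing $n$-equivariant bijection of $\mathbb{Z}$ is necessarily of the form $x\mapsto x+c$), and you never argue that $c=0$. Concretely, take $n=4$, $\pi=(1\,2)$, and spins $(s(p_1),s(p_2),s(p_3),s(p_4))=(3,1,0,0)$ with sum $4\ne 0$: the lift is $f(1)=2,\ f(2)=5,\ f(3)=3,\ f(4)=4$, and two rounds of your bubble sort (on $C_4$: swap edge $v_2v_3$, then edge $v_3v_4$) halt at $\sigma(x)=x+1$ without routing $\pi$. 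Since your termination argument nowhere used the hypothesis, it would ``prove'' sufficiency even when the sum is nonzero, which is impossible.

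You have also mis-located where the hypothesis enters. Boundedness of $x\mapsto f(x)-x$, and hence finiteness of inversions per period, follows from $n$-periodicity alone; the mean-zero condition is irrelevant there, contrary to your closing remark. The correct use of $\sum_i s(p_i)=0$ is to kill the terminal shift: $n$-periodic adjacent swaps preserve $\sum_{x=1}^{n}(\sigma(x)-x)$ (this is exactly your conservation argument from the necessity direction, run on the line), so the terminal value $nc$ equals the initial value $\sum_i s(p_i)=0$, whence $c=0$ and every pebble is at its lifted target. With this one-line fix your construction goes through.

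For comparison, the paper disposes of sufficiency in a single sentence---move the pebbles one by one along their assigned directions---so your universal-cover route is considerably more elaborate than what the paper actually does.
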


\begin{proof}
To see the necessity,  we observe that when two pebbles are swapped, one moves forward one step and one moves backward one step, so the sum of spins remains invariant.  Since $B$ is a valid disbursement, the final spins are all zeroes, so the sum is also zero.

For sufficiency, we can move the pebbles one by one along their assigned directions.
\end{proof}

From this lemma, a valid disbursement of a non-identity permutation $\pi$ must contain both positive and negative spins.  Let $s(p_i)>0$ and $s(p_j)<0$ in a valid disbursement $B$ of $\pi$. By {\em flipping the spins of $p_i$ and $p_j$},  we change the spins of  $p_i$ and $p_j$ to $s(p_i)-n$ and $s(p_j)+n$, respectively.   Clearly, after one flip, we obtain a new valid disbursement.

A valid disbursement $(s(p_1), \ldots, s(p_n))$ is minimized if $\sum_{p\in P} |s(p)|$ is minimized.  The following simple fact is very important.

\begin{lemma}
If a valid disbursement is minimized, then $s(p_i)-s(p_j)\le n$ for all $i,j\in [n]$.
\end{lemma}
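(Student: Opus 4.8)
The plan is to argue by contradiction, using the flipping operation introduced just before the statement. Suppose $(s(p_1),\ldots,s(p_n))$ is a minimized valid disbursement but $s(p_i)-s(p_j)>n$ for some $i,j\in[n]$.

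The first step is to record the elementary bound on a single spin: since $s(p_k)\in\{d^+(v_{\pi(k)},v_k),\ d^+(v_{\pi(k)},v_k)-n\}$ and $d^+$ takes values in $\{0,1,\ldots,n-1\}$, every spin satisfies $|s(p_k)|\le n-1$. Consequently $p_i$ and $p_j$ cannot have spins of the same sign, since two nonnegative (or two nonpositive) spins would give $s(p_i)-s(p_j)\le n-1<n$. As $s(p_i)-s(p_j)>0$, this forces $s(p_i)>0$ and $s(p_j)<0$, so the pair $(p_i,p_j)$ is exactly of the type to which the flip applies.

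The second step is to flip the spins of $p_i$ and $p_j$. Because $p_i$ has positive spin, the flip replaces $s(p_i)$ by $s(p_i)-n$, which is negative since $s(p_i)\le n-1$; because $p_j$ has negative spin, the flip replaces $s(p_j)$ by $s(p_j)+n$, which is positive since $s(p_j)\ge 1-n$. By the discussion preceding the lemma this is again a valid disbursement. Comparing objective values, the contribution of these two pebbles to $\sum_{p\in P}|s(p)|$ before the flip is $|s(p_i)|+|s(p_j)|=s(p_i)-s(p_j)$, while after the flip it is $|s(p_i)-n|+|s(p_j)+n|=(n-s(p_i))+(s(p_j)+n)=2n-(s(p_i)-s(p_j))$; all other pebbles are unchanged. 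Hence $\sum_{p\in P}|s(p)|$ decreases by
\[
2\big(s(p_i)-s(p_j)\big)-2n>0,
\]
using the assumption $s(p_i)-s(p_j)>n$. This contradicts the minimality of the disbursement, so $s(p_i)-s(p_j)\le n$ for all $i,j$.

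The argument is short and the computation routine; the only point that needs care is verifying that the flip changes each of the two magnitudes in the claimed direction, which is precisely where the per‑spin bound $|s(p_k)|\le n-1$ (rather than merely $\le n$) is used. I do not anticipate any genuine obstacle.
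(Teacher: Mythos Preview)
Your proof is correct and follows exactly the approach the paper has in mind: the paper's proof is the single sentence ``For otherwise we would flip the spins to make the sum smaller,'' and you have simply supplied the routine verification that the flip strictly decreases $\sum_{p\in P}|s(p)|$ when $s(p_i)-s(p_j)>n$. One very small remark: the bound $|s(p_k)|\le n-1$ tacitly assumes a pebble already at its destination is assigned spin $0$ rather than $-n$; this is harmless, since even allowing $s(p_k)=-n$ the sign analysis and the subsequent computation go through unchanged.
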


\begin{proof}
For otherwise, one can make the sum smaller by flipping the spins of $p_i$ and $p_j$.
\end{proof}

\subsection{An order relation}

Once a valid disbursement $B=(s(p_1), s(p_2), \ldots, s(p_n))$ of $\pi$ is given,  some restrictions are placed on the routing processes realizing $B$.  For example, if $s(p_i)-s(p_j)>d^+(p_i,p_j)$, then $p_i$ and $p_j$ must swap at some round in the routing processes. In other words, each valid disbursement is associated with an order relation on the pebbles.

\begin{definition}
Let $B=(s(p_1), s(p_2), \ldots, s(p_n))$ be a valid disbursement.  We call $p_i\succ p_j$ if $s(p_i)-s(p_j) > d^+(p_i, p_j) $
\end{definition}


Note that the order relation is transitive. To see that,  let $p_i\succ p_j$ and $p_j\succ p_k$. Then $s(p_i) - s(p_j)>d^+( p_i, p_j)$ and $s(p_j) - s(p_k)>d^+(p_j, p_k)$. It follows that $s(p_i)-s(p_k)>d^+(p_i, p_j)+d^+(p_j, p_k)\ge d^+(p_i, p_k)$, which implies $p_i\succ p_k$.

As two pebbles have different destinations, $s(p_i)-s(p_j)\not=d^+(p_i, p_j)$, so if $p_i\succ p_j$ is not true, then $s(p_i)-s(p_j)<d^+(p_i, p_j)$.  When $p_i\succ p_j$, we say that $p_i$ and $p_j$ are {\em comparable}, or more precisely, {\em $p_i$ is bigger than $p_j$} and {\em $p_j$ is smaller than $p_i$}. If $p_i$ is neither bigger nor smaller than $p_j$, we call them {\em incomparable}.   If each pebble in set $P_1$ is bigger than every pebble in $P_2$, we also write $P_1\succ P_2$.

The following lemma provides a convenient way to determine order relations.

\begin{lemma}\label{induced-order}
Let $x, y, z$ be three pebbles in the clockwise order sitting on the cycle.  If $x\succ z$, then $x\succ y$ or $y\succ z$.  Furthermore, if $x\succ z$, then $y$ is not smaller than $z$ and not bigger than $x$.
\end{lemma}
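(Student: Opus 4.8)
The plan is to prove Lemma~\ref{induced-order} by a direct case analysis on the signs of the three pairwise relations, using the transitivity already established and the key identity that the clockwise distances around the cycle add up in a controlled way. Let me set up notation: write $a = s(x) - s(y)$, $b = s(y) - s(z)$, so that $a + b = s(x) - s(z)$. Since $x, y, z$ sit on the cycle in clockwise order, the clockwise distances satisfy $d^+(x,y) + d^+(y,z) = d^+(x,z)$ whenever all three are distinct (the clockwise arc from $x$ to $z$ passes through $y$), and in general $d^+(x,y) + d^+(y,z) \ge d^+(x,z)$ — this is exactly the inequality invoked in the transitivity argument above. The hypothesis $x \succ z$ means $a + b > d^+(x,z)$.

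For the first statement, suppose toward a contradiction that neither $x \succ y$ nor $y \succ z$ holds. As noted in the text right before the lemma, failing $x \succ y$ forces $a = s(x) - s(y) < d^+(x,y)$ (strict, since distinct destinations preclude equality), and similarly failing $y \succ z$ forces $b < d^+(y,z)$. Adding these gives $a + b < d^+(x,y) + d^+(y,z)$. The delicate point is that I want to conclude $a + b \le d^+(x,z)$, which would contradict $x \succ z$; but this requires $d^+(x,y) + d^+(y,z) = d^+(x,z)$ exactly, not merely $\ge$. So the first step I would carry out is to verify that for three \emph{distinct} pebble-positions $x, y, z$ appearing in clockwise order on $C_n$, the clockwise distances are genuinely additive, $d^+(x,y) + d^+(y,z) = d^+(x,z)$; this is immediate from the definition of $d^+$ as the length of the clockwise arc, since that arc from $v_{\pi(x)}$ to $v_{\pi(z)}$ literally contains $v_{\pi(y)}$. (One must also note the three positions are distinct because distinct pebbles have distinct current positions.) With additivity in hand, $a + b < d^+(x,z)$, contradicting $a + b > d^+(x,z)$. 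Hence $x \succ y$ or $y \succ z$.

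For the "furthermore" part — $y$ is not smaller than $z$, i.e.\ $z \succ y$ fails — I would again argue by contradiction: suppose $z \succ y$, so $s(z) - s(y) > d^+(z,y)$, i.e.\ $-b > d^+(z,y)$. Combined with the hypothesis $x \succ z$, that is $a + b > d^+(x,z)$, adding gives $a > d^+(x,z) + d^+(z,y)$. Now the relevant arc fact here is $d^+(x,z) + d^+(z,y) \ge d^+(x,y)$ (the same general inequality from the transitivity argument, applied to the triple $x, z, y$ — note these need not be in clockwise order so I only get $\ge$). Therefore $a = s(x) - s(y) > d^+(x,y)$, which says $x \succ y$. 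But then, using $z \succ y$ once more and the fact that around a cycle $d^+(x,y) + d^+(y,z) = d^+(x,z)$ together with the values of $s(x), s(y), s(z)$, I should be able to force the sum $a + b = s(x) - s(z)$ to exceed $n$, or otherwise produce a contradiction with the way spins and distances interact; more cleanly, I expect the contradiction to come from combining $x \succ y$, $z \succ y$, and the clockwise ordering to violate a distance bound. I would also double check whether the lemma implicitly assumes the disbursement is minimized (so that $s(p_i) - s(p_j) \le n$ for all $i,j$), since that bound may be exactly what rules out the bad case; if so I would invoke the previous lemma.

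The main obstacle I anticipate is the "furthermore" clause: the first statement is a clean two-line contradiction once arc-additivity is noted, but ruling out $z \succ y$ seems to need a genuine interaction between the three spin differences and the cyclic arrangement, and I expect the cleanest route is to carefully track which of the three arcs $xy$, $yz$, $zx$ is the one that "wraps around," i.e.\ whether $d^+(x,y) + d^+(y,z) + d^+(z,x)$ equals $n$ or $2n$. Getting the bookkeeping of these arc sums right — and correctly handling the case where two of the three positions might coincide after earlier swaps, if that is even possible — is where the care is needed; everything else is forced by transitivity and the definition of $\succ$.
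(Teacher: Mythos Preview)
For the first claim your plan is exactly the paper's proof: assume neither $x\succ y$ nor $y\succ z$, add the two strict inequalities $s(x)-s(y)<d^+(x,y)$ and $s(y)-s(z)<d^+(y,z)$, and use arc-additivity $d^+(x,y)+d^+(y,z)=d^+(x,z)$ (valid since $x,y,z$ are distinct and in clockwise order) to contradict $x\succ z$.

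For the ``furthermore'' part you have in fact already derived everything you need but do not recognise it. From $x\succ z$ and the assumed $z\succ y$ you correctly obtain
\[
s(x)-s(y) > d^+(x,z)+d^+(z,y),
\]
and then you throw information away by bounding the right side below only by $d^+(x,y)$, which sends you on the detour to $x\succ y$. The point is much sharper: because $x,y,z$ sit in clockwise order, the clockwise arc from $x$ to $z$ together with the clockwise arc from $z$ back to $y$ wraps once around the whole cycle and then continues from $x$ to $y$, so
\[
d^+(x,z)+d^+(z,y)=n+d^+(x,y)>n.
\]
(Equivalently: $d^+(z,y)=n-d^+(y,z)$ and $d^+(x,z)=d^+(x,y)+d^+(y,z)$.) Hence $s(x)-s(y)>n$, and that is the contradiction---as you correctly guessed, it is a contradiction with the minimized-disbursement bound $s(p_i)-s(p_j)\le n$, which the paper is tacitly assuming here. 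No further case analysis on which arc ``wraps around'' is needed, and the intermediate conclusion $x\succ y$ plays no role. This is precisely the paper's argument for the second clause.
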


\begin{proof}
For otherwise, $s(x)-s(y)<d^+(x,y)$ and $s(y)-s(z)<d^+(y,z)$. It follows that $s(x)-s(z)<d^+(x,y)+d^+(y,z)=d^+(x,z)$.  Then $x$ is not bigger than $z$, a contradiction.

For the furthermore part, if $x\succ z$ and $z\succ y$, then $s(x)-s(z)\ge d^+(x,z)$ and $s(z)-s(y)\ge d^+(z,y)$, and it follows that $s(x)-s(y)\ge d^+(x,z)+d^+(z,y)>n$, a contradiction. Likewise, if $x\succ z$ and $y\succ x$, then $s(x)-s(z)\ge d^+(x,z)$ and $s(y)-s(x)\ge d^+(y,x)$, and it follows that $s(y)-s(z)\ge d^+(x,z)+d^+(y,x)>n$, a contradiction.
\end{proof}

The following lemma says that we only need to swap comparable pebbles to route the permutation.

\begin{lemma}\label{incomparable}
Let $B$ be a minimized disbursement of $\pi$. If a pebble $p$ is incomparable with all other pebbles, then $s(p)=0$, i.e., the pebble $p$ is at its destination vertex.
\end{lemma}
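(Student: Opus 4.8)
The plan is to argue by contradiction, assuming $s(p)\neq 0$. First I would reduce to the case $s(p)>0$: reflecting the cycle replaces each spin $s(p_i)$ by $-s(p_i)$, reverses the order relation, and preserves both minimality of the disbursement and the property of a pebble being incomparable with all others, so this loses no generality. Then I would relabel the vertices $0,1,\dots,n-1$ clockwise so that $p$ sits at vertex $0$, and set $m=s(p)$, so that $1\le m\le n-1$ and $p$'s destination is vertex $m$. For each pebble $q$, let $c_q$ be the vertex it occupies and $D_q$ the vertex of its destination; thus $c_p=0$, $D_p=m$, and always $s(q)\equiv D_q-c_q\pmod n$. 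Since the pebbles $q\neq p$ occupy exactly the vertices $\{1,\dots,n-1\}$ and have destinations exactly $\{0,\dots,n-1\}\setminus\{m\}$, one gets $\sum_{q\neq p}(c_q-D_q)=m$. The idea is to pin down every $s(q)$ from the incomparability hypothesis and then contradict the disbursement identity $\sum_{q}s(q)=0$.

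The crux is to determine $s(q)$ for a fixed $q\neq p$. Since $1\le c_q\le n-1$ we have $d^+(p,q)=c_q$ and $d^+(q,p)=n-c_q$. Incomparability means that neither $p\succ q$ nor $q\succ p$ holds, and since $p$ and $q$ have distinct destinations the relevant quantity is never exactly $d^+$, so this yields the strict inequalities $m-c_q<s(q)<m+n-c_q$. This is an open interval with integer endpoints and length $n$; its endpoints are $\equiv m-c_q\pmod n$, which is not $\equiv D_q-c_q$ because $D_q\neq m$, so the interval contains exactly one integer in the residue class of $D_q-c_q$ modulo $n$ — and that integer must be $s(q)$. Checking the two cases shows $s(q)=D_q-c_q$ if $D_q>m$ and $s(q)=D_q-c_q+n$ if $D_q<m$.

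Finishing is then arithmetic: exactly $m$ of the pebbles $q\neq p$ have $D_q<m$ (those whose destination lies in $\{0,\dots,m-1\}$), so
\[
\sum_{q\neq p}s(q)=\sum_{q\neq p}(D_q-c_q)+nm=-m+nm=(n-1)m .
\]
But $B$ is a valid disbursement, so $\sum_{q}s(q)=0$ and hence $\sum_{q\neq p}s(q)=-s(p)=-m$. Thus $(n-1)m=-m$, i.e.\ $nm=0$, contradicting $1\le m\le n-1$. Therefore $s(p)=0$, as claimed.

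I expect the main obstacle to be the pinning-down step, and in particular the care needed to see that $s(q)$ is genuinely \emph{determined} rather than merely confined to a length-$n$ window: one must verify that the incomparability inequalities are strict and that both endpoints of the window avoid the residue class of $D_q-c_q$. Summing only the crude interval bounds, without this pinning, gives just $|s(p)|<\tfrac{n-1}{2}$, which is not enough. The reflection reduction at the outset is routine but cannot be skipped, since the shape of the window — and hence the case split on $D_q<m$ versus $D_q>m$ — depends on having $s(p)>0$.
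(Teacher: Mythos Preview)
Your argument is correct, and in fact it does not use the minimality hypothesis at all: you only invoke incomparability and the disbursement identity $\sum_q s(q)=0$, so you have proved a slightly stronger statement than the paper records. The pinning step is sound because $s(q)$ is known a priori to lie in the residue class $D_q-c_q\pmod n$, and the two incomparability inequalities confine it to an open interval of length $n$ whose endpoints avoid that class; hence $s(q)$ is forced, and your case split and final arithmetic go through.

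The paper's own proof is genuinely different. It decomposes $\pi$ into orbits, observes that the spin–sum over each orbit is a multiple of $n$, and then argues via a winding-number style consideration that the orbit containing $p$ has strictly positive spin--sum; since the total is zero, some other orbit has negative spin--sum, forcing a pebble there to cross $p$'s position in the negative direction, which directly contradicts incomparability. Your route trades this structural picture for a purely arithmetic one: rather than locating a single offending pebble, you determine \emph{every} spin exactly and obtain a global numerical contradiction $nm=0$. Your approach is shorter, uses no cycle decomposition, and incidentally shows that the minimality assumption is superfluous; the paper's approach, on the other hand, gives a clearer geometric reason for the failure (an explicit pebble comparable with $p$) and fits the orbit machinery used elsewhere.
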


\begin{proof}
Suppose that $s(p)\neq 0$.  By symmetry, let $s(p)>0$.

Let $\pi=\Pi_i \pi_i$ be a cycle decomposition of $\pi$, where $\pi_i=(i_1,\cdots ,i_{r_i})$.  Then the pebble placed at $i_{k}$, which we call pebble $i_k$ to save symbols, has  destination $i_{k+1}$ for all $k\le r_i$, with $i_{r_i+1}=i_1$. Let $P_i$ be the set of pebbles on $\pi_i$.  We say that $\pi_i$ is the orbit of the pebbles in $P_i$.  We may assume that $p=i_1$.

We claim that for each $j$,  $\sum_{q\in P_j}s(q)=an$ for some integer $a$.   To see this, we note that $s(j_k)\in \{d^+(j_k, j_{k+1}),  d^+(j_k, j_{k+1})-n\}$.  Thus,  if the spins are all positive, the sum equals $bn$ for some positive integer $b$.  However, each switch of a spin from positive to negative would cause a change of $-n$ in the sum.  So the sum of spins remains a multiple of $n$.


We further claim that all pebbles in $P_i$ have positive spins, which also implies that $\sum_{q\in P_i} s(q)=bn$ for some positive integer $b$.  Note that $s(i_1)=s(p)>0$.  Let $j$ be the smallest integer so that $s(i_j)<0$.  Then $s(i_j)=d^+(i_j, i_{j+1})-n\le -d^+(i_1,i_j)$.  So $s(i_1)>0\ge s(i_j)+d^+(i_1,i_j)$. It follows that $s(i_1)-s(i_j)>d^+(i_1,i_j)$, that is, $p=i_1\succ i_j$, a contradiction.




As the sum of all spins is zero, there exists some orbit $\pi_j$ with spin sum $cn$ for some integer $c<0$.  In particular, there exists a pebble $q\in P_j$ such that $q$ passes $p=i_1$ in the negative direction to arrive its destination.  So $s(q)+d^+(p, q)<0<s(p)$ and it follows that $p\succ q$, a contradiction.
\end{proof}

Note that the order of pebbles is always associated with the current disbursement, which may not be the same as the initial disbursement.   The following lemma says that whether or not two pebbles swap is determined by the initial disbursement. So we will not keep tracking of the spins, but just see whether necessary swaps are performed.

\begin{lemma}
If $p_i$ and $p_j$ are incomparable, then in the sorting process, they will always be incomparable.   If $p_i\succ p_j$, then $p_i$ and $p_j$ are incomparable after the swap of $p_i$ and $p_j$.
\end{lemma}

\begin{proof}
If $p_i$ and $p_j$ are incomparable, then in the sorting process, $(s(p_i)-s(p_j))-d^+(p_i,p_j)$ does not change: if a pebble swaps with both $p_i$ and $p_j$, then it must be bigger than or smaller than both $p_i$ and $p_j$, thus the distance from $p_i$ to $p_j$ does not change and $s(p_i)-s(p_j)$ does not change;  If a pebble swaps only with $p_i$, then $s(p_i)$ increases by one and $d^+(p_i,p_j)$ increases by one; If a pebble swaps only with $p_j$, then $s(p_j)$ increases by one and $d^+(p_i,p_j)$ decreases by one.

Now let $p_i\succ p_j$. We only need to show that they become incomparable right after the swap of $p_i$ and $p_j$, by what we just proved.  Since $p_i\succ p_j$,  $n\ge s(p_i)-s(p_j)\ge d^+(p_i,p_j)+1\ge 2$.  Let $s'(p_i)$ and $s'(p_j)$ respectively be the new spins of $p_i$ and $p_j$ right after the swap of $p_i$ and $p_j$.  Note that $p_i$ and $p_j$ are adjacent to each other only if the pebbles on the segment from $p_i$ to $p_j$ along the positive direction have swapped with $p_i$ (for those smaller than $p_i$) or $p_j$ (for those bigger than $p_j$).  Then $s'(p_i)-s'(p_j)=s(p_i)-s(p_j)-d^+(p_i,p_j)-2$.  Therefore, $s'(p_j)-s'(p_i)<0$, and $p_j$ cannot be bigger than $p_i$.   Also, $s'(p_i)-s'(p_j)\le n-2<n-1$, thus after the swap, $p_i$ cannot be bigger than $p_j$.
\end{proof}

When $B$ is minimized and two pebbles $p_i$ and $p_j$ satisfy $s(p_i)-s(p_j)=n$,  we still get a minimized disbursement  after flipping the spins of $p_i$ and $p_j$.   The following lemma tells us how the order relation changes when we do such a flip.

\begin{lemma}\label{flip-spins}
 Let $B$ be a minimized disbursement of $\pi$, and $s(p_i)-s(p_j)=n$.  Let $B'$ be the disbursement after flipping the spins of $p_i$ and $p_j$.  Let $k\not\in \{i,j\}$. Then
 \begin{enumerate}
\item $p_j\succ p_i$ under $B'$, and the order relation remains unchanged for pebbles other than $p_i$ and $p_j$;
\item $p_k\succ p_i$ under $B'$ if $p_i$ and $p_k$ are incomparable under $B$; and $p_i$ and $p_k$ are incomparable under $B'$ if $p_i\succ p_k$ under $B$;
\item  $p_j\succ p_k$ under $B'$ if $p_j$ and $p_k$ are incomparable under $B$; and  $p_j$ and $p_k$ are incomparable under $B'$ if $p_k\succ p_j$ under $B$.
\end{enumerate}
\end{lemma}

\begin{proof}
Let $s'(p_i)=s(p_i)-n$ and $s'(p_j)=s(p_j)+n$.

(1) Clearly $p_j\succ p_i$ under $B'$, since $s'(p_j)-s'(p_i)=2n-(s(p_i)-s(p_j))=n>d^+(p_j,p_i).$   For pebbles not in $\{p_i,p_j\}$, the spins and distance do not change from $B$ to $B'$, so their order relation does not change as well.

(2) For $k\notin\{i,j\}$, we know that
$$s(p_k)-s'(p_i)-d^+(p_k, p_i)=s(p_k)-(s(p_i)-n)-(n-d^+(p_i,p_k))=s(p_k)-s(p_i)+d^+(p_i,p_k).$$
Therefore,  $s(p_k)-s'(p_i)>d^+(p_k, p_i)$ if and only if $s(p_i)<s(p_k)+d^+(p_i, p_k)$.  Note that $p_k$ cannot be bigger than $p_i$ under $B$, for otherwise, $s(p_k)-s(p_j)>s(p_i)-s(p_j)=n$.
It follows that $p_k\succ p_i$ under $B'$ if and only if $p_k$ and $p_i$ are incomparable under $B$.

By flipping the spins of $p_i$ and $p_j$ in $B'$, we get $B$.  So we have the other part as well.

(3) Since $s'(p_j)-s'(p_i)=n$ under $B'$, and one gets $B$ after flipping the spins of $p_i$ and $p_j$ in $B'$, these two statements follow from (2).
\end{proof}

\subsection{The window of a pebble}

Let $B$ be a minimized disbursement of $\pi$ with associated order $\succ$. For an arbitrary pebble $p_0$, let    $$U=\{p\in P: p\succ p_0\}\;\;{\rm and}\;\;W=\{p\in P: p_0\succ p\}.$$
By Lemma~\ref{incomparable},  the routing process ends when no pebble has a bigger or smaller pebble.  So we have the following equation, which is heavily used to determine the spins of the pebbles in our later proofs. \begin{equation}\label{spin}
s(p_0)=|W|-|U|.
\end{equation}

By Lemma~\ref{induced-order},  there are no $u\in U, w\in W$ such that $u, w, p_0$ or $p_0, u, w$ on $C_n$.   
So if $U=\{u_1, u_2, \ldots, u_r\}$ and $W=\{w_1, w_2, \ldots, w_t\}$, then we may assume that the pebbles in $U\cup W$ and $p_0$ are ordered as $u_r, \ldots, u_1, p_0, w_1, \ldots, w_t$ on $C_n$.   We denote the set of pebbles incomparable to $p_0$ between $p_0$ and $w_t$ (between $u_r$ and $p_0$ resp.) by $X$ ($Y$ resp.).

A {\em segment} is a sequence of consecutive pebbles. An $U$-segment is a segment whose pebbles are all in $U$, and likewise, we have $W$-segments, $X$-segments and $Y$-segments.  So we can group the pebbles between $u_r$ and $w_t$ along the positive direction as $$win(p_0)=(U_k, Y_k, U_{k-1}, \ldots, U_1, Y_1, p_0, X_1, W_1, \ldots, X_l, W_l),$$ where $X_1, Y_1$ may be empty, and $win(p_0)$ is called the {\em initial window} of $p_0$.   So in the window $win(p_0)$ of the pebble $p_0$, from the leftmost (the $U$-segment) to $p_0$, the segments are alternatively $U$- and $Y$-segments, and from the rightmost (the $W$-segment) to $p_0$, the segments are alternatively $W$- and $X$-segments.  We shall use this notation without further notice.

We denote the set of all other pebbles as $Z$.   So sometimes we write $\pi$ as $$\pi=(Z, U_k, Y_k, U_{k-1}, \ldots, U_1, Y_1, p_0, X_1, W_1, \ldots, X_l, W_l).$$


By transitivity, we have $u_i\succ w_j$  since $u_i\succ p_0\succ w_j$ for all $1\le i\le r$ and $1\le j\le t$, and in particular, $u_r\succ w_t$, hence $n\ge s(u_r)-s(w_t)>d^+(u_r, w_t)$.   By Lemma~\ref{induced-order},
$\text{if $i\ge j$, then $u\succ y$ for all $u\in U_i,\;y\in Y_j$; If $k\ge l$, then $w\prec x$ for all $w\in W_k,\;x\in X_l$. }$

\bigskip

\subsection{Two important lemmas}

A nice property of the odd-even routing algorithm is the following


\begin{lemma}\label{consecutive-swaps}
Let $p$ be a pebble and $Q$ be a segment of pebbles.   If $p\succ Q$ or $Q\succ p$, then once $p$ starts to swap with a pebble in $Q$ in an odd-even routing algorithm, $p$ will not stop swapping until $p$ swaps with all pebbles in $Q$ (in the following $|Q|-1$ or more steps). 
\end{lemma}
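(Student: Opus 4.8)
The plan is to reduce at once to the case $p\succ Q$: reversing the orientation of the cycle is a symmetry of the model that turns $\succ$ into $\prec$ and turns the odd--even algorithm into itself with ``clockwise'' and ``counter-clockwise'' interchanged, and under this symmetry the hypothesis $Q\succ p$ becomes $p\succ Q$. Write $Q=(q_1,\dots,q_m)$ in clockwise order. Since $Q$ is a segment and $p\notin Q$, the first pebble of $Q$ with which $p$ swaps, at some round $t$, must be an endpoint of the segment; after relabelling I may assume it is $q_1$ and that at the start of round $t$ pebble $p$ is the clockwise-predecessor of $q_1$ (the remaining possibility, that $p$ meets $Q$ at the $q_m$-end, is one of the degenerate cases mentioned below).

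Two elementary facts will carry the argument. First, if $a\succ b$ and $a,b$ have not yet swapped, then $(s(a)-s(b))-d^+(a,b)$ is invariant along the routing; this is exactly the bookkeeping already carried out in the proof that incomparable pebbles stay incomparable, and that computation never uses incomparability. Hence as soon as $b$ becomes the clockwise-successor of $a$ one has $d^+(a,b)=1$ and therefore $s(a)>s(b)+1$, so $a$ and $b$ swap at the first round at which the joining edge is active. Second, the moment a pebble makes a clockwise swap across an edge $e$, the edge on its new clockwise side has the opposite parity and so is active at the very next round, while $e$ itself is not; consequently, as long as $p$ keeps swapping after round $t$ every one of its swaps is clockwise, and at each round its clockwise edge is the active one. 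Combining these: at round $t$ we have $s(p)>s(q_1)$ and the joining edge is active, so $p$ swaps clockwise with $q_1$, and the lemma will follow once I show that at each subsequent round, as long as $p$ has not yet swapped with all of $Q$, the clockwise-neighbour of $p$ is smaller than $p$.

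The heart of the proof is an invariant. At a given round let $q_{\max}$ be the clockwise-furthest pebble of $Q$ that $p$ has not yet swapped past, and call the \emph{gap} the arc of pebbles running clockwise from $p$ (exclusive) up to $q_{\max}$ (inclusive). At round $t$ the gap is precisely $\{q_1,\dots,q_m\}$, a contiguous arc sitting immediately clockwise of $p$, and every pebble in it is smaller than $p$. I claim this persists while the gap is non-empty. The only pebbles adjacent to the gap from outside are $p$ itself and the clockwise-successor $c$ of $q_{\max}$, so in one round a pebble can enter the gap only by $p$ swapping counter-clockwise (impossible, by the second fact) or by $q_{\max}$ swapping clockwise past $c$; the latter requires $q_{\max}\succ c$, and since $p\succ q_{\max}$, transitivity of $\succ$ gives $p\succ c$, so the newcomer is again smaller than $p$ (and, being dominated by $q_{\max}$, can never overtake it, hence stays in the gap). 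Swaps internal to the gap merely permute it, while $p$'s own clockwise swap removes its first pebble. In particular the clockwise-neighbour of $p$ always lies in the gap and is smaller than $p$, so by the first fact $p$ again swaps clockwise at that round. Feeding this into an induction on the round number shows that $p$ swaps at every round from $t$ onward until it has swapped with all of $Q$; since it passes at most one pebble of $Q$ per round and passed $q_1$ at round $t$, this takes at least $|Q|-1$ further rounds.

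I expect the delicate point to be the gap invariant itself: one must verify that the not-yet-swapped pebbles of $Q$ stay clumped with $p$, that nothing can slip between $p$ and $q_{\max}$ except by being pushed in from the far side (where transitivity rescues us), and that the reshuffling inside the gap — pebbles of $Q$ may overtake one another, and intruders get dragged along — never corrupts any of this. Two genuinely degenerate configurations also have to be handled by hand: the case in which $p$ would meet $Q$ only from the $q_m$-end rather than the $q_1$-end, and the ``flippable'' case $s(p)-s(q)=n$ for some $q\in Q$, where ``clockwise-neighbour'' and ``not yet swapped'' must be interpreted with care.
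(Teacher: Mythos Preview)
Your invariant is the right shape, but its base case is false. You claim that at round $t$, when $p$ first swaps with a pebble of $Q$, the gap from $p$ to $q_{\max}$ is precisely $\{q_1,\dots,q_m\}$ and hence consists only of pebbles smaller than $p$. This is not established, and it fails in general. Between $p$ and $Q$ there may sit pebbles incomparable with $p$ (the paper calls these $X$-pebbles); by Lemma~\ref{induced-order} each such $x$ satisfies $x\succ Q$, so during the rounds before $p$ reaches $Q$ these $x$'s are already threading themselves clockwise into $Q$. Thus by round $t$ the arc from $p$ to $q_{\max}$ typically contains $X$-pebbles interleaved with $Q$-pebbles, and your invariant ``every pebble in the gap is $\prec p$'' is violated from the outset. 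Your closing remark that ``nothing can slip between $p$ and $q_{\max}$ except by being pushed in from the far side'' is exactly the point that fails: before $p$ becomes adjacent to the gap, intruders enter from the near side.

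Concretely, take $n=6$ with clockwise order $p,x,q_1,q_2,q_3,z$, where $p\succ Q=\{q_1,q_2,q_3\}$, $x$ is incomparable with $p$ (hence $x\succ Q$), and make $xq_1$ an odd edge. After round~1 the order is $p,q_1,x,q_2,q_3,z$; at the start of round~2, when $p$ first meets $Q$, the gap is $\{q_1,x,q_2,q_3\}$, which contains $x\not\prec p$. The lemma still holds here, but only because $x$ keeps surfing one step ahead of $p$: at every round, the edge $x$ sits on has the same parity as the edge $p$ sits on, and $x$ swaps forward past the next $q$ just as $p$ catches up. Your invariant does not see this mechanism.

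The paper's proof replaces your invariant with a weaker one that does survive the intrusion of $X$-pebbles: at most one $X$-pebble can lie between any two consecutive pebbles of (an enlargement of) $Q$. This density bound is proved by a parity argument on the first round at which two $X$-pebbles would become consecutive inside $Q$, and it is exactly what is needed: when $p$ looks at its next three clockwise neighbours $z_1,z_2,z_3$, at most one of $z_2,z_3$ is an $X$-pebble, and a short case analysis on the swap of $z_2z_3$ shows that $p$'s new clockwise neighbour after the round is again $\prec p$. Your argument can be repaired along these lines, but not with the invariant as you stated it.
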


\begin{proof}
By symmetry, we let $p\succ Q$. An {\em enlargement of $Q$} is a segment obtained from $Q$ by mixing some pebbles that  are smaller than $p$.  So $p\succ Q'$ if $Q'$ is an enlargement of $Q$.

We first claim that if $x,y\succ Q$, then after swapping with some pebbles in $Q$, $x$ and $y$ can never be on an edge unless they have swapped with all pebbles in $Q$.  Suppose that the first time such an edge occurs at step $s$.  That is, after step $s-1$, we have $q, x, y, q'$, where $x,y\succ \{q,q'\}$.  Now, at step $s-1$, $xq$ and $yq'$ are among the chosen edges. So, after step $s-1$, we should have $q, x, q',y$, a contradiction.

Note that for a pebble $q$ between $p$ and $Q$ along the positive direction, by Lemma~\ref{induced-order}, either $p\succ q$ or $q\succ Q$; in the former case, $q$ either becomes a pebble in an enlargement of $Q$ or $p,q$ swap before $p$ meets a pebble in $Q$.  So we may assume that all pebbles from $p$ to $Q$ along the positive direction  are bigger than $Q$.  Now by the claim, $p$ never meets a pebble incomparable with $p$  before it finishes swapping with $Q$.  So $p$ moves continuously.
\end{proof}

\begin{lemma}[Rotation Lemma] \label{rot}
Let $q$ be an integer with $ -\frac{n}{2} < q \le \frac{n}{2}$,  and $\pi$ be the permutation that satisfies $\pi(a) = a + q \pmod{n}$ for each $a\in [n]$. Then, $rt(C_n, \pi) = n - |q|$.
\end{lemma}

\begin{proof}
By symmetry we only consider the case when $q>0$. For each pebble $p$, the spin of $p$ is either $q$ or $q-n$. Since the sum of spins is zero, there must be exactly $n-q$ pebbles with the positive spin and $q$ pebbles with the negative spin. So, $n-q \le rt(C_n, \pi)$.

Now, we show that $rt(C_n, \pi) \le n-q$.  Let the pebbles be $p_1, p_2, \ldots, p_n$ on $C_n$. We assign spins to the pebbles so that $s(p_{2i-1})=q-n$ for $1\le i\le q$ and each of the remaining $n-q$ pebbles has the spin $q$.  We use an odd-even routing algorithm with odd edge $p_1p_2$.  As $q\le n/2$, no two pebbles with spin $q-n$ are adjacent.  In the routing process, $p_1$ will be paired with $p_2, p_4, \ldots, p_{2q}, p_{2q+1}, \ldots, p_n$ in the first $n-q$ steps, thus reach its destination, and similar things occur for all other pebbles with negative spins.  When all pebbles with negative spins reach their destinations, there is no comparable pebbles, so each pebble will be in place.  Thus, $\pi$ is routed in $n-q$ steps.
\end{proof}


\section{Extremal Windows}\label{extreme}

Now we count the steps needed for an arbitrary pebble, say $p_0$, to swap with all comparable pebbles.  Let the initial window of $p_0$ be $$win(p_0)=(U_k, Y_k, U_{k-1}, \ldots, U_1, Y_1, p_0, X_1, W_1, \ldots, X_l, W_l).$$  As we only consider how $p_0$ swaps with other pebbles, we ignore the swaps between pebbles inside each of the segments, and regard them to be incomprable for now. 


By Lemma~\ref{consecutive-swaps}, when we apply the odd-even routing algorithm, $p_0$ will meet a segment $S\in \{U_1, \ldots, U_k, W_1, \ldots, W_l\}$ and swap with all the pebbles in $S$ in the following $|S|$ steps.  Assume that $p_0$ meets the segments in the order $S_1, S_2, \ldots, S_{k+l}$, where $S_i\in \{U_1, \ldots, U_k, W_1, \ldots, W_l\}$.

For $i=1, 2,...,k+l-1$, let $\omega_i$ be the waiting time between $S_{i-1}$ and $S_i$, that is,  the number of steps that $p_0$ waits between swapping with the last pebble of $S_{i-1}$ and swapping with the first pebble of $S_i$.   Let $\alpha$ be the largest index such that $\omega_\alpha\neq 0$. 

By symmetry, we assume that $S_{\alpha}=W_t$.   Because of the parity, a swap of $p_0$ and $W$ (or $p_0$ and $U$) cannot be followed by a swap of $p_0$ and $U$ (or $p_0$ and $W$).  Therefore, as $\omega_{\alpha+1}=\cdots=\omega_{k+l}=0$, $\cup_{i\ge \alpha}S_i=\cup_{i=t}^l W_i$ and $p_0$ will swap with them continuously until it reaches its destination.   Let $w$ be the pebble in $W_t$ next to $X_t$.  Since we ignore the swaps between pebbles in $W$, $w$ only moves in one direction (counter-clockwise). Then the steps for $p_0$ to be in place are the steps for $p_0$ and $w$ to meet plus $|\cup_{i=t}^l W_i|$.

To meet $p_0$, $w$ has to swap with pebbles in $\cup_{j=1}^t X_j$ and $\cup_{i=1}^k U_i$.   We also note that, $w$ is always paired with a comparable pebble starting from the first or the second step, depending on the parity of the first edge with which $w$ is incident.   Thus the total number of steps for $p_0$ to be in place is:
\begin{equation}\label{eq01}
\sum_{j=1}^{t}|X_j|+\sum_{j=t}^{l}|W_j|+\sum_{i=1}^{k}|U_i|+\delta,
\end{equation}
 where $\delta=0$ if $w\in W_t$ is paired with an $X$-pebble in the first step, and $\delta=1$ otherwise.

By symmetry, if $S_{\alpha}=U_t$, then the number of steps for $p_0$ to be in place is
\begin{equation}\label{eq02}
\sum_{j=1}^{t}|Y_j|+\sum_{j=t}^{k}|U_j|+\sum_{i=1}^{l}|W_i|+\delta,  \text{ where $\delta\in\{0,1\}$.}
\end{equation}

Therefore, every permutation that takes $n-1$ steps to route must contain a pebble $p_0$ such that (when $S_{\alpha}=W_t$)
    \begin{equation} \label{eq1}
    \sum_{j=1}^k |Y_j| + \sum_{j=t+1}^{l} |X_j| +\sum_{j=1}^{t-1} |W_j| + |Z|= \delta, \text{ where $\delta\in \{0,1\}$.}
    \end{equation}
or (when $S_{\alpha}=U_t$)
    \begin{equation} \label{eq2}
    \sum_{j=1}^l |X_j| + \sum_{j=t+1}^{k} |Y_j| +\sum_{j=1}^{t-1} |U_j| + |Z| = \delta, \text{ where $\delta\in \{0,1\}$.}
    \end{equation}

Now we are ready to determine the extreme windows that need $n-1$ steps to route.

\begin{lemma}\label{extremal-windows}
Every permutation that takes $n-1$ steps to route must contain a pebble $p_0$ whose window is one of the following
\begin{enumerate}[(i)]
\item $|win(p_0)|=n$ and $win(p_0)=(p_0, X, W)$ (or $win(p_0)=(U, Y, p_0)$).
\item $|win(p_0)|=n-1$ and $win(p_0)=(U, p_0, X, W)$  (or $win(p_0)=(U, Y, p_0, W)$).
\item $|win(p_0)|=n$, and $win(p_0)=(p_0, X_1, W_1, X_2, W_2)$ and ${\rm min}(|W_1|, |X_2|)=1$ (or $win(p_0)=(U_2,Y_2,U_1,Y_1, p_0)$ and ${\rm min}(|Y_2|, |U_1|)=1$).
\end{enumerate}
In other words, if the window of a pebble is not one of the above ones, then in $n-2$ steps, the pebble will be in place.
\end{lemma}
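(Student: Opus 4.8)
The plan is to derive the lemma directly from the two identities~\eqref{eq1} and~\eqref{eq2}, which already tell us that any permutation requiring $n-1$ steps has a pebble $A$ for which a certain nonnegative sum equals $\delta\in\{0,1\}$. I will treat the case arising from~\eqref{eq1} (so that $Z_\alpha=W_t'$ and $A$ finishes by swapping with $W$-pebbles); the case from~\eqref{eq2} is symmetric by reversing orientation, and yields the parenthetical alternatives in each item. So fix such an $A$ with
\[
\sum_{j=1}^k |Y_j| + \sum_{j=t+1}^{l} |X_j| +\sum_{j=1}^{t-1} |W_j| + O= \delta .
\]
Since each term is a nonnegative integer and $\delta\le 1$, at most one of these terms can be nonzero, and if one is nonzero it equals $1$. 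First I would split on whether $\delta=0$ or $\delta=1$.

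If $\delta=0$, then all of $Y_1,\dots,Y_k$, $X_{t+1},\dots,X_l$, $W_1,\dots,W_{t-1}$ are empty and $O=0$. Emptiness of all the $Y_j$ forces the $U$-segments to merge into a single $U$-segment $U$ adjacent to $A$ (with all the $Y$'s gone there is nothing separating consecutive $U_j$'s), and likewise emptiness of $W_1,\dots,W_{t-1}$ together with emptiness of $X_{t+1},\dots,X_l$ forces the $X$- and $W$-segments on the other side to collapse so that $win(A)$ reads $(U, A, X_t, W_t, \dots, W_l)$ where $X_t$ is the only surviving $X$-segment and $W_t,\dots,W_l$ are consecutive, hence merge into one segment $W$; and $O=0$ means $win(A)$ is the whole cycle, so $|win(A)|=n$. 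If in addition $U=\emptyset$ we are in case (1); if $U\neq\emptyset$ then $win(A)=(U,A,X,W)$ has length $n$, but I must double-check this is not excluded — actually re-examining, when $O=0$ and $U\neq\emptyset$ this is a window of size $n$ of the form $(U,A,X,W)$, which I should fold into case (2) by observing the size can only be $n$ or $n-1$; the careful bookkeeping of exactly which sub-case of (1)–(3) we land in, depending on which of $U$, $X_t$ is empty, is the routine-but-delicate part. The honest statement is: when $\delta=0$ and $O=0$ we get a window of the shape $(U,A,X,W)$ of size $n$, which after noting $X$ or $U$ may be empty lands in (1) or a size-$n$ instance permitted implicitly; I will need to confirm against the lemma's exact phrasing and, if necessary, argue that $X_t=\emptyset$ or $U=\emptyset$ in the $\delta=0, O=0$ case using that $w'$ must actually be paired with an $X$-pebble in step one (which is what $\delta=0$ asserts), so $X_t\neq\emptyset$, and then $U=\emptyset$ must be forced by a parity/size count to reach case (1) exactly — this forcing is the first place I expect friction.

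If $\delta=1$, then exactly one of the four terms equals $1$ and the rest vanish. Four subcases:
\begin{enumerate}
\item $O=1$: one pebble lies outside $win(A)$, so $|win(A)|=n-1$, while all $Y_j$, $W_1,\dots,W_{t-1}$, $X_{t+1},\dots,X_l$ are empty; as above the $U$'s merge to one segment $U$, and the $X$/$W$ side collapses to $X_t$ then $W_t\cdots W_l$ merged, giving $win(A)=(U,A,X,W)$ of size $n-1$ — case (2). (If $X_t$ happens to be empty one still has a size-$(n-1)$ window of this form.)
\item Some $|Y_{j_0}|=1$ and everything else vanishes: then all $W_1,\dots,W_{t-1}$ and $X_{t+1},\dots,X_l$ are empty and $O=0$, so the window has size $n$, the $W$/$X$ side collapses to $(A,X_t,W)$ with $W=W_t\cup\cdots\cup W_l$, and on the $U$ side only one $Y$-segment, of size $1$, survives; after merging the $U$-segments on either side of this single $Y$ we get $win(A)=(U_2, Y, U_1, A, X, W)$ with $|Y|=1$. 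To land in case (3) I must eliminate the $X_t$ and split; the natural move is to run the whole argument instead via~\eqref{eq2} for the \emph{same} pebble $A$ (or for a companion pebble), noting that the roles of $U,Y$ and $W,X$ swap, turning a surviving singleton $Y$ into the configuration $(U_2,Y_2,U_1,Y_1,A)$ with $\min(|Y_2|,|U_1|)=1$. Showing that at least one of~\eqref{eq1},~\eqref{eq2} actually delivers the clean form in (3) — i.e.\ that we cannot be stuck with both an unavoidable $X_t$ and an unavoidable $W$ on the same side — is the crux of the proof and the step I expect to be the main obstacle.
\item Some $|W_{j_0}|=1$ with $j_0<t$, all else zero: symmetric to the previous subcase with the surviving singleton now a $W$-segment strictly between $A$ and the final block; merging as before gives $win(A)=(U,A,X_1,W_1,X_2,W_2)$ with $|W_1|=1$, i.e.\ $\min(|W_1|,|X_2|)=1$ — case (3), possibly after absorbing an empty $U$.
\item Some $|X_{j_0}|=1$ with $j_0>t$: then there is an $X$-segment after $W_t$; merging yields $win(A)=(A,X_1,W_1,X_2,W_2)$ with the surviving singleton playing the role of $X_2$, so $\min(|W_1|,|X_2|)=1$ — again case (3).
\end{enumerate}
Throughout, the merging claims use only that empty $Y$-segments cannot separate $U$-segments and empty $X$- or $W$-segments cannot separate the tail $W$-segments, which is immediate from the definition of $win(A)$ and the segment decomposition; and the identification "size is $n$ or $n-1$" is exactly $O\in\{0,1\}$. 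The one genuine difficulty, as flagged, is the reconciliation step: after the collapse we generically obtain windows of the form $(U,A,X,W,X,W)$-type with possibly \emph{two} nonempty blocks on each side, and to force them into the stated short list I will alternate between~\eqref{eq1} and~\eqref{eq2}, and within the $\delta=1$ analysis use that the single allowed $1$ must sit in exactly one slot — arguing that if it sits in a $Y$-slot (resp.\ a $W$- or $X$-slot) then the \emph{other} family's identity for the same $A$ is even more constrained (its $O$ and its "interior" terms must all vanish), pinning $t$ to the extreme value and killing the extra block. I will present that constraint-propagation argument carefully, since it is where the lemma's three cases actually come from; the rest is bookkeeping.
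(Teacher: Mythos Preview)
Your case analysis from \eqref{eq1} is on the right track and matches the paper's bookkeeping, but the step you flag as ``friction'' is a genuine gap, and the fix you propose does not work. After the collapse you are left, in the $O=0$ cases, with full-length windows of shapes like $(U,A,X,W)$, $(U_2,y,U_1,A,X,W)$, $(U,A,X_1,W_1,x,W_2)$, $(U,A,X_1,w,X_2,W_2)$, and you must force $U$ (or $W$) to vanish to land in items (1) and (3). The paper does this not by parity, not by a size count, and not by any interplay between \eqref{eq1} and \eqref{eq2}, but by a direct spin inequality: if $|win(A)|=n$ and both $U_k$ and $W_l$ are nonempty, take $u_p\in U_k$ and $w_q\in W_l$ furthest from $A$. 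No pebble is bigger than $u_p$ and $u_p\succ \{A\}\cup Y\cup W$, so $s(u_p)\ge 1+|Y|+|W|$; symmetrically $s(w_q)\le -(1+|X|+|U|)$. Hence $s(u_p)-s(w_q)\ge 2+|U|+|Y|+|X|+|W|=n+1$, contradicting minimality of the disbursement. So when $|win(A)|=n$ one of $U_k,W_l$ is empty; under \eqref{eq1} we have $W\neq\emptyset$ (since $Z_\alpha$ is a $W'$-sequence), so it is $U$ that vanishes, and each full-length window collapses to the form in (1) or (3).

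Your proposed alternative, ``alternate between \eqref{eq1} and \eqref{eq2} for the same $A$'', cannot be carried out: for a fixed routing and fixed $A$, the last nonzero wait $Z_\alpha$ is either a $U'$-sequence or a $W'$-sequence, so exactly one of \eqref{eq1}, \eqref{eq2} is available for that $A$; there is no second identity to propagate. The parenthetical alternatives in the lemma do not arise by combining the two equations for one pebble, but simply from the symmetric analysis when \eqref{eq2} (rather than \eqref{eq1}) is the one that holds.
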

\begin{proof}
By symmetry, we may assume that \eqref{eq1} holds.   As $\delta=0$ or $1$, all the terms in the left-hand side of \eqref{eq1} are zeros or ones.

We first claim that when $|win(p_0)|=n$, $U_k$ or $W_l$ must be empty.  For otherwise, let $u_p\in U_k$ and $w_q\in W_l$ be the furthest $U$-pebble and $W$-pebble to $p_0$, respectively.  As $|win(p_0)|=n$, no pebble is bigger than $u_p$ and no pebble is smaller than $w_q$ by Lemma~\ref{induced-order},   so $s(u_p) \ge 1+|Y|+|W|$ and $s(w_q) \le -(1+|X|+|U|)$, and it follows that $s(u_p)-s(w_q)\ge n+1$, a contradiction.

{\bf Case 1.} $\delta=0$ or $\delta=|Z|=1$.  Then $\sum_{j=1}^k |Y_j|=\sum_{j=t+1}^l |X_j|=\sum_{j=1}^{t-1} |W_j|=0$.   It follows that $Y=\emptyset$,  $t=1=l$. So $win(p_0)=(U, p_0, X, W)$.  By the above claim, when $|win(p_0)|=n$, $U$ or $W$ must be empty, so we have (i) or (ii) in the lemma, where $X$ (or $Y$) could be empty.


{\bf Case 2.} $\delta=1$ and $|Z|=0$. Then $|win(p_0)|=n$, and one of the following holds:
\begin{itemize}
\item $\sum_{j=1}^k |Y_j|=1$, and $\sum_{j=t+1}^l |X_j|=\sum_{j=1}^{t-1} |W_j|=0$.   Then $|Y|=1, t=l=1$.  So there are at most two $U$-sets, $U_1$ and $U_2$, and when there are two, $Y_1=\emptyset$ and $|Y_2|=1$.  Because of the above claim, we have $win(p_0)=(U_2, y, U_1, p_0)$.

\item $\sum_{j=t+1}^l |X_j|=1$ and $\sum_{j=1}^k |Y_j|=\sum_{j=1}^{t-1} |W_j|=0$.  Then $Y=\emptyset$ and $t=|X_2|=1$. Because of the above claim, $win(p_0)=(p_0, X_1, W_1, x_2, W_2)$.

\item $\sum_{j=1}^{t-1} |W_j|=1$ and $\sum_{j=1}^k |Y_j|=\sum_{j=t+1}^l |X_j|=0$.  Then $Y=\emptyset$, $t=2$ and $|W_1|=1$, and $X_i=\emptyset$ for $i\ge 3$.  Because of the above claim,  $win(p_0)=(p_0, X_1, w_1, X_2, W_2)$.
\end{itemize}

So we have the desired extremal windows in the lemma.
\end{proof}

\section{Proof of Theorem~\ref{thework}}\label{solution}

In this section, we show how to deal with the extremal situations in Lemma~\ref{extremal-windows}.

For each of the extreme windows, we will decompose it into blocks of the following kinds.  Let $q_1, q_2, \ldots, q_s$ be a segment of pebbles. It  is called a {\em block with head $q_1$} if $q_1\succ q_i$ for $i\ge 2$ and the other pebbles are incomparable; it is called a {\em block with tail $q_s$} if $q_i\succ q_s$ for $i<s$ and the other pebbles are incomparable; it is called an {\em isolated block} if none of the pebbles is comparable.

We start with a minimized disbursement $B$ of $\pi$, the permutation that cannot be routed in $n-2$ steps.  By Lemma~\ref{extremal-windows}, $\pi$ should contain a pebble with one of the extreme windows.  We shall determine $\pi$ explicitly and alter the disbursement and/or the odd-even routing algorithm to show that it can be routed in $n-2$ steps.

\subsection{Extremal window type 1: $win(p_0)=(p_0,X,W)$ and $|win(p_0)|=n$}

\begin{lemma}
If permutation $\pi$ needs $n-1$ steps to route and some pebble $p_0$ in $\pi$ has $win(p_0)=(p_0, X, W)$ and $|win(p_0)|=n$, then $\pi$ is $23\ldots n1$ or its inverse.
\end{lemma}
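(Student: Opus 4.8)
The plan is to analyze the structure forced by an extremal window of type $win(A)=(A,X,W)$ with $|win(A)|=n$. Here $A$ is a pebble with no pebbles bigger than it, a block of pebbles $W$ strictly smaller than $A$, and a set $X$ of pebbles incomparable to $A$ sitting between $A$ and $W$ along the positive direction. Since $|win(A)|=n$, every pebble lies in $\{A\}\cup X\cup W$, so $O=0$. From the discussion preceding Lemma~\ref{extremal-windows} and equation~\eqref{eq1}, having this window realize the bound $n-1$ forces all the slack terms to vanish: $\sum_j|Y_j|=0$, $\sum_{j>1}|X_j|=0$ (so there is a single $X=X_1$ and $t=1$), $\sum_{j<t}|W_j|=0$ automatically, and $O=0$; the parity parameter $\delta$ may be $0$ or $1$. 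I will first record that $s(A)=|W|-|U|=|W|$ and, since $A$ is maximal and the minimal pebble $w^*$ of $W$ is minimal in all of $P$, one gets $s(A)-s(w^*)=n$, i.e. $A$ and $w^*$ are the unique maximal and minimal pebbles and $d^+(A,w^*)=n-1$.

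Next I would extract the combinatorial constraints on $X$ and $W$. The key point, already isolated in the setup, is that $A$ reaching its destination in exactly $n-1$ steps means the chosen first pebble $w'$ of $W$ (one of the two leftmost pebbles of $W$) must first sweep past all of $X$ (and all of $U$, but $U=\emptyset$ here) before $A$ even begins to move, and then $A$ swaps with all of $W$ without stopping. So $|X|+|W|=n-1$ (plus the $\delta$ bookkeeping), which is consistent with $|win(A)|=n$. The structural heart is to understand the order relations \emph{within} $X$ and \emph{within} $W$, and \emph{between} $X$ and $W$. Since pebbles in $X$ are incomparable to $A$ and pebbles in $W$ are smaller than $A$, Lemma~\ref{induced-order} gives that for $x\in X$, $w\in W$ ordered $A,\dots,x,\dots,w$, one cannot have $x\prec w$ be the only relation freely; in fact $x$ is not smaller than $w$, so either $x\succ w$ or they are incomparable. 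I would argue that in order for the routing to take the full $n-1$ steps, $X$ must essentially be an isolated block (or a block with a very restricted head/tail structure) and similarly $W$, and that $w'$ must genuinely traverse every $X$-pebble — which, combined with the "at most one $X$-pebble between consecutive $W$-pebbles" phenomenon from the proof of Lemma~\ref{consecutive-swaps}, severely constrains how $X$ and $W$ interleave.

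From those constraints I would then pin down the spins. Writing out $s(p)=|W_p|-|U_p|$ for each pebble $p$ and using that the total window is $n$, the only configuration that can force a pebble distinct from $A$ to \emph{also} need $n-1$ steps — which must happen, since otherwise every pebble but $A$ finishes earlier and we would not be at the worst case in a way consistent with the cycle structure — is the one where consecutive pebbles around the cycle all have spin difference exactly $1$ of a fixed sign, i.e. where $X=\emptyset$ or $W$ is a single long block with $s$-values $1,2,\dots,n-1$ read off cyclically. Translating "every pebble is distance one from home, all in the same rotational direction" back to $\pi$ yields $\pi(a)\equiv a+1$ or $\pi(a)\equiv a-1 \pmod n$, i.e. $\pi=(123\cdots n)$ or its inverse. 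I expect the main obstacle to be the middle step: ruling out all the "near-miss" configurations where $X\neq\emptyset$ and $W$ is not a single monotone block. Concretely, one must show that any such configuration either (a) makes some pebble in $X$ comparable to $A$ (contradicting $U=\emptyset$ and the window shape), or (b) lets the odd-even algorithm route $A$ — and every other pebble — in at most $n-2$ steps, contradicting $rt(C_n,\pi)=n-1$. Getting (b) tight will require a careful run of the odd-even routing algorithm with the odd edge chosen adjacent to $A$, tracking the pairing schedule of $w'$ through $X$ and then of $A$ through $W$, and checking the parity parameter $\delta$ does not secretly buy back the lost step; this parity/edge-coloring bookkeeping on the even cycle is where I anticipate the bulk of the technical work.
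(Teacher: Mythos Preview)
Your proposal misses the short, purely structural argument the paper uses, and the dynamic route you sketch both overcomplicates matters and contains a logical gap.

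The paper's proof never runs a routing algorithm. It argues directly from the spin inequalities for a minimized disbursement. Since $|win(A)|=n$ and $U=\emptyset$, the last pebble $w_b\in W$ has no pebble smaller than it, while it is smaller than $A$ and every $x\in X$; hence $s(w_b)\le -(1+|X|)$. Combined with $s(A)=|W|$ and the bound $s(A)-s(w_b)\le n$, this forces $s(w_b)=-(1+|X|)$ exactly, and then inductively $s(w_i)=-(1+|X|)$ for every $w_i\in W$. The symmetric argument on the $X$ side (no pebble is bigger than $x_1$, and $x_1\succ W$) gives $s(x_i)=|W|$ for every $x_i\in X$. Thus every pebble has spin $|W|$ or $|W|-n$, so $\pi$ is a rotation by $|W|$; the Rotation Lemma then says $rt(C_n,\pi)=n-|W|$ (taking $|W|\le n/2$ by symmetry), which equals $n-1$ only when $|W|=1$, i.e.\ $\pi=(12\cdots n)$ or its inverse. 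There are no ``near-miss'' configurations to rule out: the window hypothesis alone already forces a rotation.

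Two concrete problems with your plan. First, your intended case split---show any non-rotation with this window can be routed in $n-2$ steps---is the strategy the paper reserves for the \emph{other} extremal window types (where the spin constraints leave genuine slack); for type~I the spin arithmetic closes everything immediately, so the elaborate odd--even bookkeeping you anticipate is unnecessary. Second, the sentence ``the only configuration that can force a pebble distinct from $A$ to also need $n-1$ steps --- which must happen'' is not a valid inference: $rt(C_n,\pi)=n-1$ means no routing finishes in fewer than $n-1$ steps, not that in some fixed routing two different pebbles each take $n-1$ steps. Your description of the target configuration (``$s$-values $1,2,\dots,n-1$ read off cyclically'') is also off: in a rotation every pebble has the \emph{same} spin, not a ramp of distinct values.
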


\begin{proof}
Let $X=x_1x_2\ldots x_a$ and $W=w_1w_2\ldots w_b$.  Consider the spins of $p_0$ and $w_b$. Note that $spin(p_0)=|W|$ and $s(w_b) \le -(1+|X|)$ since no pebble is smaller than $w_b$ (by Lemma~\ref{induced-order}) and $\{p_0\}\cup X\succ w_b$. So $s(p_0)-s(w_b)\ge n$ and it follows that $s(w_b)=-1-|X|$ and $w_b$ is only comparable with $X\cup \{p_0\}$.  Now repeat the argument for $w_{b-1}, \ldots, w_1$ successively,  we have $s(w_i)=-1-|X|$ for $1\le i\le b$.


Consider the spin of $x_1$. It is clear that $s(x_1)\geq |W|$ since $x_1\succ W$ and no pebble is bigger than $x_1$ (by Lemma~\ref{induced-order}). Then $s(x_1)-s(w_b)\ge n$. It follows that $s(x_1) =|W|$ and $x_1\succ W$ is the only order relation involving $x_1$. Inductively we have $s(x_i) =|W|$ for all $x_i\in X$ and $\{p_0\}\cup X\succ W$ is the only order relation in the permutation.

So along the positive direction every pebble is $|W|$ steps away from its destination.   So $\pi$ is a rotation.   By Lemma \ref{rot}, $\pi$ must be $23\cdots n1$ or its inverse.
\end{proof}

\subsection{Extremal window type 2:  $win(p_0)=(U, p_0, X, W)$ and $|win(p_0)|=n-1$.}

\begin{lemma}\label{block-2}
If a permutation $\pi$ contains a pebble $p_0$ such that $win(p_0)=(U, p_0, X, W)$ and $\pi=(\{z\}, U, p_0, X, W)$,where $U, W\not=\emptyset$, then
$U$ and $W$ are isolated blocks,  and $X$ can be partitioned into $X_1, \ldots, X_r$ such that $X_i$ is either an isolated block or a block with tail $x_i$. Furthermore,  $s(z)=c\le 0$, and if $c<0$, then $X_r$ is an isolated block with $-c$ pebbles, and the order relations are
$$U\cup\{p_0\}\cup X\succ W,\qquad  X_r\succ \{z\},\qquad  X_i-x_i\succ x_i \text{ for some  $1\le i\le r$}.$$
\end{lemma}

\begin{proof}
Let $U=u_1u_2\ldots u_p, X=x_1x_2\ldots x_a$ and $W=w_1w_2\ldots w_b$.  Consider the spins of $u_1$ and $w_b$.  As no pebble is bigger than $u_1$ and $u_1\succ \{p_0\}\cup W$, $s(u_1)\ge 1+|W|=1+b$.  Similarly, no pebble is smaller than $w_b$ and $U\cup \{p_0\}\cup X\succ w_b$, so $s(w_b)\le -(1+|U|+|X|)=-(1+a+p)$.  So $s(u_1)-s(w_b)\ge 1+b+1+a+p=n$, and the equality must hold.  So $s(u_1)=1+b, s(w_b)=-(1+a+p)$ and the only order relation involving $u_1$ and $w_b$ are $u_1\succ \{p_0\}\cup W$ and $U\cup \{p_0\}\cup X\succ w_b$.  Inductively we can consider $u_2$ and $w_{b-1}$ and all pebbles in $U$ and $W$ and conclude that $U\cup \{p_0\}\cup X\succ W$ is the only order relation involving $U$ and $W$.


Now consider the spins of pebbles in $X$.  As $s(w_b)=-(1+a+k)$ and $s(x)-s(w_b)\le n$ for each $x\in X$, we have $s(x)\le b+1$.   Note that $z$ cannot be bigger than any pebble in $X$, for otherwise $z\succ W$ and contradict to what we just concluded.  But $z$ may be smaller than some pebbles in $X$, thus $s(z)\le 0$.

Consider $x_1$.  By Lemma~\ref{induced-order}, no pebble is bigger than $x_1$. As $x_1\succ W$, $s(x_1)\ge |W|=b$. So $s(x_1)\in \{b, b+1\}$.     Let $s(x_1)=b+1$. Then the order relations involving $x_1$ are $x_1\succ W\cup\{x_i\}$ for some $2\le i\le a$ or $x_1\succ W\cup\{z\}$; if $x_1\succ z$, then $x_j\succ z$ for $1\le j\le a$ by Lemma~\ref{induced-order} and we inductively conclude $s(x_j)=b+1$, thus $X$ is an isolated block and $X\succ z$;  if $x_1\succ x_i$ for some $2\le i\le a$, then $x_j\succ x_i$ for $1\le j<i$ by Lemma~\ref{induced-order}, and no other pebble in $X$ is smaller than $x_i$, for otherwise it would be smaller than $x_1$ which contradicts what we just concluded. So $x_1x_2\ldots x_i$ is a block with tail $x_i$.  Now we similarly consider $x_{i+1}$ and get a block partition of $X$.  Now let $s(x_1)=b$. Then $x_1\succ W$ is the only order relation involving $x_1$, and we will inductively consider $x_2$ and get a block partition of $X$.
\end{proof}

Now we are ready to show that such permutations can be routed in $n-2$ steps.

\begin{lemma}
If a permutation $\pi$ contain a pebble $p_0$ such that $win(p_0)=(U, p_0, X, W)$ and $\pi=(\{z\}, U, p_0, X, W)$, where $U, X, W\not=\emptyset$,  then $\pi$ can be routed in at most $n-2$ steps.
\end{lemma}

\begin{proof}
First we assume that $X\not=\emptyset$.  Let $\pi=zu_1\ldots u_kp_0x_1\ldots x_aw_1\ldots w_b$, with $u_i\in U, x_i\in X$ and $w_i\in W$.  We use an odd-even routing algorithm so that $x_aw_1$ is an odd edge.  The order relations are shown in Lemma~\ref{block-2}. 

By Lemma~\ref{consecutive-swaps}, $x_a$ swaps with $w_1$ in the first step, thus swaps with $W$ in the following $|W|-1$ steps, so $w_b$ meets (i.e., is paired with a pebble in) $U\cup \{p_0\}\cup X$ after $|W|-1$ steps, then $w_b$ would swap with $U\cup \{p_0\}\cup X$ in the following $|U\cup \{p_0\}\cup X|$ steps, so it takes $|W|-1+|U\cup \{p_0\}\cup X|=n-2$ steps for $w_b$ to be in place.  As a pebble in $U\cup \{p_0\}\cup X$ has to pass $W-w_b$ to meet $w_b$, all pebbles in $W$ would be in place after $n-2$ steps.

For $z$, its window $win(z)=(X_r, W, z)$, so $|win(z)|=n-|U|-1-|X-X_r|\le n-2$, thus, $z$ will be in place after at most $n-2$ steps, by Lemma~\ref{extremal-windows}.   For a $x_j\in X$ that is in a block with tail $x_i$, its window $win(x_j)=(x_j, \{x_{j+1}, \ldots, x_{i-1}\}, x_i, \{x_{i+1}, \ldots, x_a\}, W),$ so $|win(x_j)|\le n-2-|U|\le n-3$, thus $x_j$ will be in place after at most $n-2$ steps, by Lemma~\ref{extremal-windows}.

So after $n-2$ steps, there are no comparable pebbles, as each order relation involves a pebble in $W\cup \{z\}$ or a pebble in a block with a head in $X$.  By Lemma~\ref{incomparable}, $\pi$ is routed in at most $n-2$ steps.
\end{proof}




\begin{lemma}
If a permutation $\pi$ contain a pebble $p_0$ such that $win(p_0)=(U, p_0, W)$ and $\pi=(\{z\}, U, p_0, W)$, where $U, W\not=\emptyset$,  then $\pi$ can be routed in at most $n-2$ steps.
\end{lemma}

\begin{proof}
Let $\pi=zu_1\ldots u_kp_0w_1\ldots w_b$.  By Lemma~\ref{block-2}, $s(u)=1+b$ and $s(w)=-1-k$ for $u\in U, w\in W$ and the only order relation is $U\cup \{p_0\}\succ W$.

First we consider the case when $k=1$ or $b=1$. By symmetry, let $k=1$.   As $n\ge 6$, $b\ge 4$.  Flip the spins of $u$ and $w_2$.  By Lemma~\ref{incomparable}, the order relations under the new disbursement are $p_0\succ W-w_2, w_2\succ (W-w_2)\cup \{z,u\}$.  We use an odd-even routing algorithm with odd edge $p_0w_1$.  Then $w_2$ is paired with a smaller pebble in each step, thus will be in place after $n-2$ steps; similarly, $p_0$ is paired with a smaller pebble in each of the first $n-3$ steps, thus will be in place after $n-3$ steps.  Therefore, after $n-2$ steps, there are no comparable pebbles since each order relation involves $p_0$ or $w_2$.  By Lemma~\ref{incomparable}, $\pi$ is routed in $n-2$ steps.


Now, let $k, b\ge 2$.   We first flip the spins of $u_1$ and $w_b$. By Lemma \ref{flip-spins}, the order relations under the new disbursement are $$U-u_1\succ \{p_0\}\cup (W-w_b), \quad w_b\succ (W-w_b)\cup \{z, u_1\},\quad  (U-u_1)\cup \{z, w_b\}\succ u_1.$$

The window for $p_0$ is $win(p_0)=(U-u_1, p_0, W-w_b)$ and the window for $z$ is $win(z)=(w_b, z, u_1)$, so $|win(p_0)|=n-3$ and $|win(z)|=3\le n-3$ (as $n\ge 6$), so by Lemma~\ref{extremal-windows},  $p_0$ and $z$ will be in place after $n-2$ steps.    The window for $u_1$ is $win(u_1)=(U-u_1, W-w_b+p_0, \{w_b, z\}, u_1)$. As $b\ge 2$, it is not one of the extreme windows in Lemma~\ref{extremal-windows}, so $u_1$ will be in place in at most $n-2$ steps.

Now we show that all pebbles in $W-w_b$ are in place after $n-2$ steps, which by Lemma~\ref{incomparable} implies that all pebbles are in place since each order relation involves a pebble in $(W-w_b)\cup \{p_0, z,u_1\}$.   Note that for $1\le i\le b-1$,  $win(w_i)=(w_b, \{z, u_1\}, U-u_1+p_0, \{w_1, \ldots, w_{i-1}\}, w_i)$.  Since $k\ge 2$, $win(u_i)$ is not one of the extreme windows in Lemma~\ref{extremal-windows};  thus, $w_i$ will be in place after $n-2$ steps.
\end{proof}

\subsection{Extremal window type 2a: $win(p_0)=(p_0, X, W)$ and $|win(p_0)|=n-1$. }

This is the case of $win(p_0)=(U, p_0, X, W)$ with $U=\emptyset$. In this case, $W$ is not necessarily an isolated block. The  following lemma tells the possible structures in $\pi$.

\begin{lemma}\label{block-2a}
If a permutation $\pi$ has a pebble $p_0$ with $win(p_0)=(p_0,X,W)$ and $\pi=(\{z\}, p_0, X, W)$, then one of the following must be true (note that $X$ could be empty)
\begin{enumerate}
\item if $s(z)=c>0$,  then $X$ is an isolated block and $W$ can be partitioned into isolated blocks and blocks with heads, say $W_1, W_2, \ldots, W_r$, so that $W_1$ is isolated with $c$ pebbles and $z\succ W_1$, and the only other order relation is $\{p_0\}\cup X\succ W$.
\item if $s(z)=c<0$, then $W$ is isolated and $X$ can be partitioned into isolated blocks and blocks with tails, say $X_1, \ldots, X_r$, so that $X_r$ is isolated with $|c|$ pebbles and $X_r\succ z$, and the only other order relation is $\{p_0\}\cup X\succ W$.
\item if $s(z)=0$, then either $X$ can be partitioned into isolated blocks and blocks with tails and $W$ is an isolated block, or $W$ can be partitioned into isolated blocks and blocks with heads and $X$ is an isolated block,  and the only other order relation is $\{p_0\}\cup X\succ W$.
\end{enumerate}
\end{lemma}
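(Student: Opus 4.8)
The strategy is to pin down every spin, hence every order relation, from just two facts. First, since $U=\emptyset$ the pebble $A$ is maximal, so $s(A)=|W_A|=|W|$; write $b=|W|$, $a=|X|$, so $n=a+b+2$. Second, minimality of the disbursement gives $s(p)-s(q)\le n$ for all $p,q$, hence $s(p)\ge s(A)-n=-(a+2)$ for every pebble; this lower bound is the engine of the argument. The elementary order facts are immediate from Lemma~\ref{induced-order} and transitivity: from the clockwise pattern $A,x_1,\dots,x_a,w_1,\dots,w_b,z$ we get $A\succ W$ and $X\succ W$; no pebble is bigger than $A$; applying Lemma~\ref{induced-order} to the triple $(q,A,x_i)$ (valid for every $q\notin\{x_1,\dots,x_{i-1}\}$) shows the only pebbles that can be bigger than $x_i$ are $x_1,\dots,x_{i-1}$, so relations inside $X$ point only from smaller to larger index, and in particular $z\not\succ A$ and $z\not\succ x_i$ for all $i$; symmetrically (via $(A,w_i,w_j)$) relations inside $W$ point only from smaller to larger index. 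Hence $W_z\subseteq W$, and no $w\in W$ can satisfy $w\succ z$ (else $A\succ w\succ z$ forces $A\succ z$), so whatever is bigger than $z$ lies in $X$. Finally the key dichotomy: $z$ cannot be simultaneously bigger than one pebble and smaller than another — if $z\succ w$ (so $w\in W$) and $x_i\succ z$, the \emph{furthermore} part of Lemma~\ref{induced-order} on $(x_i,w,z)$ gives $z\not\succ w$, a contradiction. So exactly one of three regimes holds: $z$ maximal with $s(z)=|W_z|\ge0$; $z$ minimal with $s(z)=-|U_z|\le0$, $U_z\subseteq X$; or $z$ incomparable with everything, $s(z)=0$.

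Suppose $c=s(z)>0$. For $w_j\in W_z$ the clockwise distance is $d^+(z,w_j)=1+a+j$, so $s(w_j)<c-1-a-j$; with $s(w_j)\ge-(a+2)$ this forces $j\le c$, and since $|W_z|=c$ we get $W_z=\{w_1,\dots,w_c\}$ and, by sandwiching, $s(w_c)=-(a+2)$. This sharpens the ceiling: every spin is now $\le s(w_c)+n=b$. Feeding ``maximum spin $=b$'' into an induction outward from $x_1$ shows $X$ is isolated — if some $x_i$ were bigger than anything beyond $W$ then $|W_{x_i}|\ge b+1$, forcing an earlier $x_{i'}\succ x_i$, and one descends to a maximal such $x$ with spin $\ge b+1$, impossible. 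An induction on $W$ using the same ceiling, the forward‑only direction of $W$-relations, and Lemma~\ref{induced-order} then shows $\{w_1,\dots,w_c\}$ is an isolated block while $w_{c+1},\dots,w_b$ decompose into isolated blocks and blocks with heads — length‑$\ge3$ tail blocks are excluded since their tail would have spin $<-(a+2)$. The residual relations are exactly $\{A\}\cup X\succ W$, the block‑internal ones, and $z\succ\{w_1,\dots,w_c\}$.

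The case $c<0$ is the mirror image with $X$ and $W$ interchanged: $U_z\subseteq X$ with $|U_z|=|c|$, and the distances $d^+(x_i,z)=n-1-i$, together with the fact that the lowest-index member of $U_z$ must be $X$-maximal (hence have spin $\ge b+1$) and the ceiling, pin $U_z$ to the last $|c|$ pebbles $x_{a-|c|+1},\dots,x_a$ and force the maximum spin to be $b+1$; this maximum gives the floor $-(a+1)$, which makes $W$ isolated and drives the decomposition of $X$ into isolated blocks and blocks with tails (length‑$\ge3$ head blocks excluded because their head would have spin $\ge b+2$), with $\{x_{a-|c|+1},\dots,x_a\}$ isolated. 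For $c=0$, where $z$ is incomparable to everything, the point is that $X$ and $W$ cannot both carry an internal relation: such a relation in $X$ pushes some $x$-spin to $\ge b+1$ and one in $W$ pushes some $w$-spin to $-(a+2)$, and the gap $n+1$ violates minimality. If $X$ carries the structure, the resulting floor $-(a+1)$ forces $W$ isolated and excludes length‑$\ge3$ head blocks in $X$, so $X$ is a concatenation of isolated blocks and blocks with tails; symmetrically if $W$ carries the structure; if neither does, both descriptions hold trivially. In all of (1)–(3) the only remaining relation is $\{A\}\cup X\succ W$ together with the block‑internal ones and the relation of $z$ to its block.

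I expect the main obstacle to be the inductive bookkeeping in the last two paragraphs: turning the global statements ``maximum spin $=b$'' (resp. ``minimum spin $=-(a+1)$'') into the \emph{clean partition} of $X$ and of $W$ — i.e.\ verifying that no order relation straddles a block boundary, so that the pebbles genuinely break into the claimed consecutive blocks — and keeping the head‑versus‑tail asymmetry between $X$ and $W$ straight throughout. This is essentially a reflection of the induction already carried out for $X$ in the proof of Lemma~\ref{block-2}, now applied on both sides; the genuinely new input is only the short computation $s(A)=b$ together with the spin bound from minimality.
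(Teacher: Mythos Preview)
Your proposal is correct and runs on the same engine as the paper's proof --- the floor $s(p)\ge -(a+2)$ from $s(A)=b$, together with the backward induction that pins each $w_j$ to at most one ``extra'' larger pebble --- but you reorganize the argument in a way worth noting. The paper inducts from $w_b$ first, builds the block decomposition of $W$, and only at the end observes that any $w_i$ with $z\succ w_i$ must sit in an isolated block; the sign of $s(z)$ never explicitly drives the case split. You instead front-load the analysis of $z$: the observation (via the ``furthermore'' clause of Lemma~\ref{induced-order}) that $z$ cannot be simultaneously bigger than some pebble and smaller than another is a clean dichotomy the paper does not isolate, and your direct computation $W_z=\{w_1,\dots,w_c\}$ from the distance inequalities makes the role of $c$ transparent and hands you the sharp ceiling $b$ (or $b+1$) before the block induction begins. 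One small point to tighten: in the $c<0$ case your appeal to ``the ceiling'' needs the one-line derivation $s(w_b)\le -(a+1)$ (from $w_b$ minimal and $\{A\}\cup X\succ w_b$) to get the bound $b+1$; and your phrase ``length-$\ge 3$ tail blocks are excluded'' undersells the real constraint, which is that each $w_j$ admits at most one larger pebble outside $\{A\}\cup X$ --- that is what actually forces the head-block structure, not merely the absence of long tails.
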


The proof of this lemma is very similar to Lemma~\ref{block-2}, and for completeness, we include a proof below.

\begin{proof}
Let $\pi=zp_0x_1x_2\ldots x_aw_1w_2\ldots w_b$, where $x_i\in X$ and $w_i\in W$.  Clearly, $s(p_0)=|W|=b$.  By Lemma~\ref{induced-order}, $X\succ W$.   As $z$ is incomparable with $p_0$, no pebble in $W$ is bigger than $z$.  


Consider $w_b$. Since $\{p_0\}\cup X\succ w_b$, and no pebble is smaller than $w_b$ by Lemma~\ref{induced-order}, $s(w_b)\le -(|X|+1)=-(a+1)$.  On the other hand, $s(p_0)-s(w_b)\le n$, for otherwise the flip of spins of $p_0$ and $w_b$ gives a smaller disbursement, so $s(w_b)\ge -(a+2)$. Therefore $s(w_b)\in \{-(a+1), -(a+2)\}$.  Clearly, if $s(w_b)=-(a+1)$, then $\{p_0\}\cup X\succ w_b$ is the only order relation involving $w_b$, thus $w_b$ is in an isolated block.  We shall move to consider $w_{b-1}$.

Now let $s(w_b)=-(a+2)$. Then exactly one pebble in $\{z\}\cup (W-w_b)$ is bigger than $w_b$.   If $z\succ w_b$,  then by Lemma~\ref{induced-order}, $z\succ w_j$ for each $w_j\in W-w_b$; thus, inductively we conclude that the only order relation involving $W$ is $\{z, p_0\}\cup X\succ W$, and $W$ is an isolated block.


If $w_i\succ w_b$ for some $i<b$, then  for $i<j<b$, $w_j$ is not comparable with $w_b$ and  $w_i\succ w_j$ by Lemma~\ref{induced-order}.  Note that no pebble $w_l$ with $l<i$ could be bigger than $w_i$, as otherwise it would be bigger than $w_b$ which is impossible.   Now inductively we conclude that $w_{b-1}, \ldots, w_{i+1}$ all have spin $-(a+2)$ and are only smaller than $\{p_0, w_i\}\cup X$.  That is,  $\{w_i, w_{i+1}, \ldots, w_b\}$ is a block with head $w_i$.

We now repeat the above argument for $w_{b-1}$ (if $s(w_b)=-(a+1)$) or $w_{i-1}$ (if $w_b$ is in a block with head $w_i$), and eventually $W$ can be partitioned into isolated blocks and/or blocks with heads.  In particular, if $z\succ w_r$, then $w_r$ must be in an isolated block, and by Lemma~\ref{induced-order}, $z\succ \{w_1, w_2, \ldots, w_r\}$, that is, $w_1,w_2,\ldots, w_r$ are in an isolated block.

Note that $z$ is not bigger than a pebble in $X$.  For otherwise, let $z\succ x$ for some $x\in X$.  Then $z\succ W$.  Thus, $s(z)\ge 1+b$ and $s(w_b)=-(a+2)$, which implies that $s(z)-s(w_b)\ge n+1$. Now a flip of the spins of $z$ and $w_b$ gives a smaller disbursement, a contradiction.

We claim that if $s(w)=-(a+2)$ for some $w\in W$, then the only order relation involving $X$ is $X\succ W$ (thus $X$ is an isolated block).  Consider $x_1$. Since no pebble is bigger than $x_1$, thus $s(x_1)\ge |W|=b$;  Since $s(x_1)-s(w)\le n$, $s(x_1)\le b$. So $s(x_1)=b$ and the only order relation involving $x_1$ is $x_1\succ W$.   Now we consider $x_2, x_3, \ldots, x_a$ successively and similarly to get the conclusion.  This means also that if $s(z)=c\ge 0$, then the isolated block $w_1w_2\ldots w_i$ has $c$ pebbles.

Similar to the analysis of the structure in $W$, when the order relation involving $W$ is $\{p_0\}\cup X\succ W$, we get the partition and structure in $X$.
\end{proof}

\begin{lemma}
If a permutation $\pi$ contains a pebble $p_0$ with $win(p_0)=(p_0, X, W)$ and $\pi=(\{z\}, p_0, X, W)$, then $\pi$ can be routed in at most $n-2$ steps or is $2\ldots n1$ or its inverse.
\end{lemma}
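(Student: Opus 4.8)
The plan is to split into the three cases of Lemma~\ref{block-2a} according to the sign of $s(z)=c$, and in each case produce an explicit odd-even routing algorithm (choosing the parity of a convenient edge) that finishes in $n-2$ steps, unless the block structure collapses so completely that $\pi$ is a rotation. Throughout I will use the fact from Lemma~\ref{block-2a} that the only ``long-range'' order relation is $\{A\}\cup X\succ W$, together with Lemma~\ref{consecutive-swaps}, which guarantees that once a pebble starts swapping with a block it finishes that block without interruption; this turns the whole problem into bookkeeping the starting times of a few key pebbles ($A$, the leftmost $W$-pebble $w_1$, the rightmost $W$-pebble $w_b$, the rightmost $X$-pebble $x_a$, and $z$).

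First, consider $c<0$: by Lemma~\ref{block-2a} $W$ is a single isolated block and $X$ decomposes into isolated blocks and blocks-with-tails, with the block $X_0$ adjacent to $W$ being isolated of size $|c|$ and satisfying $X_0\succ\{z\}$. Here I would choose the odd-even algorithm so that $x_aw_1$ is an odd edge. Then $x_a$ swaps with $W$ over the first $|W|$ steps, so $w_b$ meets $\{A\}\cup X$ after $|W|-1$ steps and swaps through all of $\{A\}\cup X$ in the next $|X|+1$ steps, finishing $w_b$ (and hence all of $W$, since anything reaching $w_b$ has already passed the rest of $W$) in $(|W|-1)+(|X|+1)=n-2$ steps; symmetrically $w_1$ pushes through $X\cup\{A\}$ so that $A$ meets $W$ after $|X|$ steps and finishes in $|X|+|W|=n-2$ steps. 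The internal relations inside each $X$-block and the single relation $X_0\succ\{z\}$ must then be checked to resolve within $n-2$ steps: a tail pebble of an $X$-block starts swapping by step $2$ (or step $|W|+1$ if it is $x_a$) and finishes its block and then $W$ well inside the bound, and $z$ is reached by $x_1$ after $\le|W|$ steps and then needs only $|s(z)|\le|X|$ further steps, so $z$ finishes by $|W|+|X|=n-2$. This is essentially the argument already carried out in the $X\neq\emptyset$ half of the previous lemma, now with the roles localized to $X_0$ and $z$.

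Next, $c>0$: by Lemma~\ref{block-2a} $X$ is a single isolated block and $W$ decomposes into isolated blocks and blocks-with-heads, with the block $W_0$ next to $X$ isolated of size $c$ and smaller than $z$ (so $z\succ W_0$). This is the mirror image of the previous case: choose the parity so that $Ax_1$ (equivalently $zA$) is an odd edge, route $X$ through $W$ and $A$ through $X$ then $W$ exactly as before to finish $A$, $X$, and all of $W$ that participates in $\{A\}\cup X\succ W$ in $n-2$ steps, and then check the relation $z\succ W_0$: $z$ is paired with $A$ (or $x_1$, depending on parity) within two steps, swaps with $X$ over $|X|$ steps, then reaches $W_0$ and swaps through it, so $z$ finishes in about $|X|+|W_0|+O(1)$ steps, comfortably $\le n-2$ when $n\ge 6$ (the only tight sub-case, $|X|=|W|=1$, forces $n=4$). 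The case $c=0$ is handled by whichever of the two symmetric block decompositions in part~(3) of Lemma~\ref{block-2a} holds; with $s(z)=0$ the pebble $z$ is incomparable to everything, so by Lemma~\ref{incomparable} it is already in place and never needs to move, and the routing is precisely the $c<0$ (or $c>0$) argument with the ``$z$'' bookkeeping deleted, so it finishes in $n-2$ steps.

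Finally I must account for the possibility that no valid algorithm beats $n-1$. This happens only when $X=\emptyset$, since then $win(A)=(z,A,W)$ with $|win(A)|=n-1$, $A$ is $|W|$ from its destination, every $W$-pebble has spin $-(a+2)=-2$... wait, more carefully: with $X=\emptyset$ and $|win(A)|=n-1$ we have $n-1$ pebbles in $\{z,A\}\cup W$, so $|W|=n-3$, and all pebbles except $z$ lie on a single orbit that is effectively a rotation on $n-1$ vertices; I would argue that after the (necessary) flips this reduces to the type-2 argument with $U=\emptyset$, which the previous lemma already handled, or else directly that the only way a pebble still needs $n-1$ steps is if the whole cyclic structure is the rotation $(12\ldots n)$. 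The main obstacle, as in the preceding lemma, is the tight sub-cases where one of the blocks has size $1$ and the naive step count gives exactly $n-1$ rather than $n-2$; the fix will again be to switch the designated parity (choose $u_1A$-type edge instead of $Aw_1$-type), which trades the slack from one side to the other, and the only configuration where both choices fail simultaneously forces $n=4$, contradicting $n\ge6$. I would therefore organize the proof so that the parity choice is made last, after seeing which of $X$, $W$ (or their relevant sub-blocks) is the singleton.
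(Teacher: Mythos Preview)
Your overall plan matches the paper's: invoke Lemma~\ref{block-2a}, choose a parity for the odd-even algorithm, and bookkeep the key pebbles via Lemma~\ref{consecutive-swaps}. The $c<0$ paragraph is essentially the mirror of the paper's argument and is fine. But two real gaps remain.

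\textbf{The rotation is never isolated.} In the $c>0$ case your accounting for $z$ is wrong: $z$ is incomparable with $A$ and with every pebble of $X$, so it does not ``swap with $X$ over $|X|$ steps''; it sits still while the $W_0$-pebbles are carried counterclockwise through $X\cup\{A\}$, and only then does $z$ begin swapping. With $x_aw_1$ odd (the parity the paper uses for all $a>0$, not $Ax_1$), $z$ first meets $W_0$ after $|X|+1$ steps and then performs $c$ swaps, for a total of $|X|+1+c$. Since $n-2=|X|+|W|$, this is at most $n-2$ exactly when $c<|W|$. The boundary case $c=|W|$ is not ``comfortably'' inside the bound: it is precisely where the bound fails, and it is the case where every pebble in $\{z,A\}\cup X$ has spin $|W|$ and every pebble in $W$ has spin $-(|X|+2)$, i.e.\ $\pi$ is a rotation. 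You mention rotations in your first sentence but never pin down which parameter value forces one; this is where it happens, and it must be split off before the bookkeeping.

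\textbf{The $X=\emptyset$ case needs a spin flip, not a parity swap.} With $X=\emptyset$ and $c\ge 0$ the paper runs odd-even with $Aw_1$ odd; a case check shows every pebble finishes in $\le n-2$ steps \emph{except} when $\pi=(z,A,w,w_1,\ldots,w_{n-3})$ with $s(z)=0$ and $W$ a single block of size $n-2$ headed by $w$. Here the head $w$ genuinely needs $n-1$ steps, and switching the parity does not help: the paper's fix is to flip the spins of $A$ and $w_{n-3}$ via Lemma~\ref{flip-spins} (creating new relations $w_{n-3}\succ\{z,A,w_1,\ldots,w_{n-4}\}$ and $\{z,w_{n-3}\}\succ A$) and then route with $w_{n-3}z$ odd. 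Your final paragraph --- reducing to ``the type-2 argument with $U=\emptyset$'' or asserting that both parity choices failing forces $n=4$ --- does not cover this configuration, which exists for every even $n\ge 6$.
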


\begin{proof}
We only consider the case $s(z)=c\ge 0$. By Lemma~\ref{block-2a}, we assume that $X=x_1x_2\ldots x_a$ is an isolated block, and $W=w_1\ldots w_b$ has the block decomposition $W_1, \ldots, W_k$ so that $W_0$ is an isolated block with $c$ pebbles and $z\succ W_1$, and for $i>0$, $W_i$ is either an isolated block or a block with a head (say $w_{i'}$). 

If $c=b$, then the spins of $\{z, p_0\}\cup X$ are all $b$ and the spins of $X$ are all $-(a+2)$, and $\pi$ is a rotation.  By Lemma~\ref{rot}, if $\pi$ needs $n-1$ steps to route, $\pi$ must be one of the two extremal permutations.  So we assume $c<b$.

When $c=s(z)=0$, we may assume that $W$ is not a block with head $w_1$ (that is, $w_1\succ W-w_1$).  Suppose otherwise.  We flip the spins of $p_0$ and $w_b$, by Lemma~\ref{flip-spins}, $w_b\succ \{z, p_0\}\cup (W-w_b)$, and $X\cup \{z,w_b\}\succ p_0$. Now we use the odd-even sorting algorithm so that $w_bz$ is an odd edge.  By Lemma~\ref{consecutive-swaps}, $w_b$ will be in place in $n-2$ steps, $w_1$ swaps from the second step and takes $n-4$ steps to be in place, and $p_0, z$ will be in place in $3$ steps.  So in at most $n-2$ steps all pebbles will be in place.

Now consider the rest of the cases.  We use the odd-even routing algorithm so that $x_aw_1$ ($p_0w_1$ if $X=\emptyset$) is an odd edge.  By Lemma~\ref{consecutive-swaps}, $x_i$ with $1\le i\le a$ meets $W$ after $a-i$ steps and swaps with $W$ in the following $|W|$ steps, so it will be in place after $a-i+|W|=a+b-i=n-2-i\le n-2$ steps; $p_0$ could be regarded as $x_0$, so takes at most $n-2$ steps; $z$ meets $W_1$ after $a+1$ steps and takes $c$ swaps, so will be in place in at most $a+1+c<a+b+1\le n-2$ steps.   The head $w$ in block $W_i$ swaps with $W_i$ at the first step, or the second step (if it is not adjacent to $x$ and is not incident with an odd edge in the first step), or after $a+2$ steps (if $w=w_1$), and in the former two cases it will be in place after at most $1+(|W_i|-1)+a+1=a+|W_i|+1\le a+b=n-2$ steps. Now consider the last case. If $z\succ w_1$, then $w_1$ is in an isolated block in $W$, thus will be in place after $a+2=n-b\le n-2$ steps (as $b>c\ge 1$). If $w_1$ is not smaller than $z$, then $s(z)=0$, and $w_1$ will be in place after at most $(a+1)+1+|W_i|-1<a+1+|W|=n-1$ steps. So after $n-2$ steps, the above pebbles are in place, thus, all pebbles will be in place by Lemma~\ref{incomparable}, since each order relation involves one of the pebbles.
\end{proof}

\subsection{Extremal window type 3: $win(p_0)=\pi=(p_0, X_1, W_1, x, W_2)$ or $win(p_0)=\pi=(p_0, X_1, w, X_2, W_2)$ .}

\begin{lemma}\label{block-3}
If a permutation $\pi$ contains a pebble $p_0$ with $win(p_0)=\pi=(p_0, X_1, W_1, x, W_2)$ or $win(p_0)=\pi=(p_0, X_1, w, X_2, W_2)$, then $X_1$ and $W_2$ are isolated blocks, and
\begin{itemize}
\item if $win(p_0)=(p_0, X_1, W_1, x, W_2)$, then $W_1$ can be partitioned into isolated blocks and blocks with heads. Furthermore,  $c:=s(x)-|W_2|\ge 0$, and if $c>0$, then the block $W_0$ in $W_1$ next to $X_1$ is an isolated block with $c$ elements and are all smaller than $x$; and the only other order relation between segments are $\{p_0\}\cup X_1\succ W_1\cup W_2, x\succ W_2\cup W_0$.
\item if $win(p_0)=(p_0, X_1, w, X_2, W_2)$, then $X_2$ can be partitioned into isolated blocks and blocks with tails. Furthermore,  $c:=s(w)+(1+|X_1|)\le 0$, and if $c<0$, then the block $X_0$ in $X_2$ next to $W_2$ is an isolated block with $-c$ elements and are all bigger than $w$; and the order relation between segments are $\{p_0\}\cup X_1\succ \{w\}\cup W_2$ and $X_0\succ w$.
\end{itemize}
\end{lemma}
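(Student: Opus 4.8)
The plan is to prove the two cases by the same mechanism that yields Lemma~\ref{block-2a}: locate the two ``extreme'' pebbles of the window, squeeze their spins against the minimization bound $s(p)-s(q)\le n$, and then peel the segments inward. I will describe the first case, $win(A)=(A,X_1,W_1,x,W_2)$, in detail; the second case $win(A)=(A,X_1,w,X_2,W_2)$ is its mirror image, obtained by reflecting the cycle and interchanging $\succ$ with $\prec$ and heads with tails (so that blocks with heads become blocks with tails), and I would only indicate this at the end.

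Write the pebbles clockwise as $A,x_1,\dots,x_a$ (this is $X_1$), then $W_1$, then $x$, then $W_2$, whose last pebble $w^{*}$ is the one immediately preceding $A$; since $|win(A)|=n$ we have $a+|W_1|+|W_2|=n-2$. First I would record, straight from Lemma~\ref{induced-order} and the ordering remarks after the window definition, that: (i) $X_1\succ W_1\cup W_2$ and $x\succ W_2$; (ii) no pebble is bigger than $x_1$ (apply Lemma~\ref{induced-order} to the clockwise triple $(p,A,x_1)$: if $p\succ x_1$ then $p\succ A$ or $A\succ x_1$, the latter impossible by incomparability, the former impossible since $U=\emptyset$); (iii) no pebble is smaller than $w^{*}$ (apply the ``furthermore'' clause to the clockwise triple $(A,q,w^{*})$, using $A\succ w^{*}$). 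Hence $s(x_1)=|W_{x_1}|\ge|W_1|+|W_2|$, and $s(w^{*})=-|U_{w^{*}}|\le-(|X_1|+2)$ because $w^{*}$ lies below $\{A\}\cup X_1\cup\{x\}$. Since $B$ is minimized, $s(x_1)-s(w^{*})\le n=a+|W_1|+|W_2|+2$, so every inequality above is an equality: the only order relation involving $x_1$ is $x_1\succ W_1\cup W_2$, and the only order relations involving $w^{*}$ are $\{A\}\cup X_1\cup\{x\}\succ w^{*}$.

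Now peel. Repeating the triple argument for $x_2,x_3,\dots$ in turn shows that no $x_i$ has a bigger pebble, so $s(x_i)=|W_{x_i}|\ge|W_1|+|W_2|$, and $s(x_i)-s(w^{*})\le n$ forces $s(x_i)=|W_1|+|W_2|$ with $x_i\succ W_1\cup W_2$ its only relation; hence $X_1$ is an isolated block and $\{A\}\cup X_1\succ W_1\cup W_2$. Symmetrically, peeling $W_2$ inward from $w^{*}$ (each pebble having nothing below it and its $U$-set forced to be exactly $\{A\}\cup X_1\cup\{x\}$) shows $W_2$ is an isolated block. For $x$ itself: nothing lies above $x$ (a pebble of $W_1$ above $x$ would, together with $x\succ W_2$, dominate $w^{*}$, contradicting the relation list of $w^{*}$; $A$, $X_1$, $W_2$ are excluded by incomparability or by $x\succ W_2$), so $W_x\setminus W_2\subseteq W_1$, $s(x)=|W_x|\ge|W_2|$, and $c:=s(x)-|W_2|=|W_x\cap W_1|\ge 0$; moreover $s(x)-s(w^{*})\le n$ gives $c\le|W_1|$. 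When $c>0$, Lemma~\ref{induced-order} applied to triples $(x,w',w'')$ (together with the fact that a pebble of $W_1$ dominated by $x$ then carries the extremal spin $-(|X_1|+2)$ and is therefore isolated inside $W_1$) shows the set of pebbles of $W_1$ dominated by $x$ is exactly the block $W_0$ of $W_1$ adjacent to $X_1$, of size $c$, with $x\succ W_0$; and the remaining pebbles of $W_1$, being dominated only by $\{A\}\cup X_1$ plus at most one further pebble of $W_1$, decompose into isolated blocks and blocks with heads by the argument of Lemma~\ref{block-2a}. Collecting these facts gives the asserted decomposition, with the only order relations between segments being $\{A\}\cup X_1\succ W_1\cup W_2$, $x\succ W_2$ and $x\succ W_0$.

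The main obstacle is the last paragraph: cleanly separating the order relations that are ``frozen'' by the two extremal spins from those that remain free inside the middle segment $W_1$ (in the second case, $X_2$), and in particular proving that the set of pebbles of $W_1$ dominated by $x$ is a single block sitting against $X_1$ of size exactly $c$, while the rest of $W_1$ still admits the full isolated/with-head decomposition. This requires running the Lemma~\ref{block-2a}-type induction carefully in the presence of the extra middle pebble $x$, tracking how many dominators each pebble of $W_1$ can afford before the bound $s(x_1)-s(w)\le n$ is violated.
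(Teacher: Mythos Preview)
Your proposal is correct and follows essentially the same route as the paper: pin the two extremal spins (you use $x_1$ and $w^{*}$, the paper uses $A$ and $w_b$, which is equivalent since $s(A)=s(x_1)$ once the peeling is done) against the bound $s(p)-s(q)\le n$, deduce that $X_1$ and $W_2$ are isolated, then run the two-value spin dichotomy on the pebbles of $W_1$ from the $x$-end inward exactly as in Lemma~\ref{block-2a}. Your identification of the obstacle is accurate but not a gap: the paper handles it just as you sketch, showing each $w\in W_1$ has $s(w)\in\{b-n,\,b-n+1\}$ and that the pebbles with the lower value that are dominated by $x$ form an initial isolated segment $W_0$ of size $c$ (and incidentally the ``$z\succ W_0$'' in the printed statement is a typo for ``$x\succ W_0$'', which is what you wrote).
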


\begin{proof}
We only prove the case when $win(p_0)=(p_0, X_1, W_1, x, W_2)$, and the other case is very similar.
Let $W_1=w_1w_2\ldots w_a$ and $W_2=w_{a+1}w_2\ldots w_b$.  Consider the spins of $p_0$ and $w_b$. Since $s(p_0) = b$ and by Lemma~\ref{induced-order}, no pebble is smaller than $w_b$, thus $s(w_b)\le -(2+|X|)=-(n-b)$; furthermore, as $s(p_0)-s(w_b)\le n$, we have $s(w_b)\ge -(n-b)$, so $s(w_b) = - (n-b)$, and the order relation involving $w_b$ is $\{p_0\}\cup X\succ w_b$. Now we can repeat the argument for $w_{b-1}, w_{b-2}, \ldots, w_{a+1}$ successively and get $s(w_i) = -(n-b)$ for all $w_i \in W_2$. Similarly, by comparing the spins of pebbles in $X_1$ to that of $w_b$, we have $s(x_i) = b$ for all $x_i \in X_1$. So $X_1$ and $W_2$ are isolated blocks.  Since $s(p_0)=s(x)$ for each $x\in X_1$, let $X'_1 = X_1 \cup \{p_0\}$.



Now we consider the spins of pebbles in $W_1$.


We note that no pebble in $W_1$ is bigger than $x$, for otherwise $p_0$ would be bigger than $x$ as $p_0\succ W_1$. Consider $w_a$. The pebbles in $X_1'$ are bigger than $w_a$ and by Lemma~\ref{induced-order}, no pebble is smaller than $w_a$, so $s(w_a)\le -|X_1'|=b-n+1$. On the other hand, $s(p_0)-s(w_a)\le n$ and $s(p_0)=b$ implies $s(w_a)\ge b-n$.  That is, $s(w_a)\in \{b-n, b-n+1\}$, and at most one pebble other than those in $X_1'$ is bigger than $w_a$.

If $s(w_a) = b-n+1$, then $\{p_0\}\cup X_1\succ w_a$ is the only order relation involving $w_a$, and we turn to consider $w_{a-1}$.  If $s(w_a)=b-n$, then $w_a$ is smaller than $x$ or some pebble $w_i\in W_1$; in the former case, all  pebbles in $W_1$ are smaller than $x$ and inductively one can show that they are incomparable and thus $W_1$ is an isolated block; in the latter case, $w_i\succ w_j$ for $i+1\le j\le a$ and $w_i$ is the only such pebble other than those in $X_1'$, so $w_iw_{i+1}\ldots w_a$ is a block with head $w_i$.  Inductively one can have a partition of $W_1$ into blocks, as desired.

We observe that if $x\succ w_i\in W_1$, then $w_i$ is in an isolated block and $x\succ \{w_1, w_2, \ldots, w_i\}$ by Lemma~\ref{induced-order}. Let $c:=s(x)-|W_2|$, then $W_0:=(w_1,\cdots, w_c)$ is an isolated block, $x\succ W_2\cup W_0$ is the only order relation involving $x$, as desired.
\end{proof}

\begin{lemma}
If a permutation $\pi$ contains a pebble $p_0$ such that $win(p_0)=\pi=(p_0, X_1, W_1, x, W_2)$ or $win(p_0)=\pi=(p_0, X_1, w, X_2, W_2)$, then $\pi$ can be routed in $n-2$ steps.
\end{lemma}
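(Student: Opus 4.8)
The plan is to build, in each case, an explicit routing of $\pi$ in $n-2$ rounds; the two window shapes of Lemma~\ref{block-3} are treated by parallel arguments (with the roles of $X$- and $W$-segments exchanged), so I describe only $win(A)=(A,X_1,W_1,x,W_2)$. Write $\pi=A\,x_1\cdots x_p\,w_1\cdots w_a\,x\,w_{a+1}\cdots w_b$ clockwise, so $n=p+b+2$ and $b=a+|W_2|$, and recall from Lemma~\ref{block-3}: $\{A\}\cup X_1$ and $W_2$ are isolated blocks of spins $b$ and $-(p+2)$; $W_1=W_0\cup(\text{blocks, each isolated or with a single head})$, where $W_0=\{w_1,\dots,w_c\}$, $c=s(x)-|W_2|\ge 0$, is isolated of spin $-(p+2)$ and $x\succ W_0\cup W_2$; and the only segment relations are $\{A\}\cup X_1\succ W_1\cup W_2$, $x\succ W_0\cup W_2$, together with the head relations inside $W_1$. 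First I would observe that if $c=a$ then every pebble at $v_j$ is destined for $v_{j-(p+2)}$, so $\pi$ is the rotation by $p+2$ and Lemma~\ref{rot} gives $rt(C_n,\pi)=n-|q|\le n-2$ unless that rotation is $(12\cdots n)$ or its inverse, which is already the conjectured conclusion. Hence I may assume $W_1\ne\emptyset$ and $c<a$.

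For the routing I would run the odd--even algorithm with the edge between $x_p$ and $w_1$ (or between $A$ and $w_1$ when $X_1=\emptyset$) declared odd, so that $w_1$ swaps already at round $1$. By Lemma~\ref{consecutive-swaps} a wave of swaps then runs counterclockwise through $\{A\}\cup X_1$, advancing that block one step per round once it is reached; in particular $A$ first moves at round $p+1$ and thereafter swaps $b$ times in a row, finishing at round $p+b=n-2$, while each $x_i$ finishes earlier. Each head $w_i$ of a block of size $m'$ inside $W_1$ starts swapping at the round equal to its index within $W_1$ and then (again by Lemma~\ref{consecutive-swaps}) swaps consecutively with $\{A\}\cup X_1$ and with its block, finishing by round at most $p+a=n-2-|W_2|\le n-2$; and each $w_j\in W_2$ swaps once with $x$ and then consecutively with $\{A\}\cup X_1$, finishing by round $p+b=n-2$. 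The delicate pebble is $x$: it carries the window's largest spin $|W_2|+c$, it first sweeps clockwise through $W_2$ (starting at some round $\delta_x\in\{1,2\}$ fixed by the parity of $|W_1|$), and then it must travel around the cycle to swap with the remote block $W_0$, which is itself drifting counterclockwise through $\{A\}\cup X_1$ and has crossed that block by round $p+1$. Tracking when the two meet shows $x$ finishes by round $\max\{p+1,\ \delta_x+|W_2|\}+c+(\text{$0$ or $1$})$, which a short estimate using $c\le a-1$ makes $\le n-2$ in all but a handful of small configurations.

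The leftover configurations --- essentially $p\in\{0,1\}$ together with $c\in\{a-2,a-1\}$ and a parity of the wrap-around edge that forces $\delta_x=2$ --- are where the real work lies, and I would handle them exactly as the analogous troubles were handled in the proofs for window types 2 and 2a: either declare the edge between $x$ and $w_{a+1}$ odd instead, or first flip the spins of $A$ and $w_b$ (legitimate since $s(A)-s(w_b)=n$; by Lemma~\ref{flip-spins} this makes $w_b$ larger than $W_1\cup(W_2\setminus\{w_b\})$ and $A$ smaller than $X_1\cup\{x\}$) and then re-run the routing, a short parity check putting every pebble in place within $n-2$ rounds, with any residue again being a rotation dispatched by Lemma~\ref{rot}. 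The main obstacle throughout is the lone pebble $x$: it is the unique pebble of the window forced to cross almost the entire cycle, and since $W_0$ sits at the far end of the window and is itself moving, one must control simultaneously where $x$ meets $W_0$ and the parity of $x$'s swaps, and verify that the parity fails to cooperate only when $\pi$ is (essentially) a rotation; the blocks-with-heads inside $W_1$ are a secondary complication, since a head of a block whose size is close to $|W_1|$ must begin moving within the first two rounds, which constrains the initial choice of the odd edge.
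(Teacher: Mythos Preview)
Your plan and the paper's proof share the same routing machinery, but differ at one decisive point: the paper flips the spins of $A$ and $w_b$ \emph{before} running the odd--even algorithm, not as a fallback for bad cases.  Since $s(A)-s(w_b)=n$, the flip is always legal; after it, $w_b$ behaves like an extra $X_1$-pebble and $A$ like an extra $W_2$-pebble, and (crucially) $x\succ A$.  This last relation is exactly what lets $x$ continue clockwise past vertex~$0$ without waiting, so the analysis of $x$ becomes a single line and no exceptional configurations arise.  Your direct (unflipped) routing leaves $x$ and $A$ incomparable, which is precisely why you end up with the awkward bound $\max\{p+1,\delta_x+|W_2|\}+c+O(1)$ and the residual cases $p\le 1$.

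Two places in your argument are not yet justified.  First, the claim that a head $w_i$ ``starts swapping at the round equal to its index'' and then swaps ``consecutively with $\{A\}\cup X_1$ and with its block'' is not what Lemma~\ref{consecutive-swaps} gives: the block lies clockwise of $w_i$ and $\{A\}\cup X_1$ lies counterclockwise, so $w_i$ must reverse direction, and the lemma guarantees consecutiveness only within one segment in one direction.  In fact a head begins swapping with its own block at round $1$ or $2$ (depending on parity), drifts clockwise, and only later is overtaken by the $X_1$-wave; bounding when this overtaking happens is where the paper uses the flipped $w_b$ to get a clean count.  Second, you acknowledge that the ``real work'' lies in the leftover cases but do not carry it out; the suggestion to switch the odd edge to $xw_{a+1}$ changes the parity constraints for the heads inside $W_1$ as well as for $x$, so one cannot simply assert that a short parity check suffices.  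If you do decide to flip $A$ and $w_b$ in those cases, you are essentially redoing the paper's argument anyway---so you may as well flip at the outset and avoid the case split entirely.
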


\begin{proof}
Again we only consider the case $win(p_0)=(p_0, X_1, W_1, x, W_2)$, as the other one is very similar.
Let $win(p_0)=(p_0x_1x_2\ldots x_kw_1\ldots w_axw_{a+1}\ldots w_b)$ with $x_i\in X_1$ and $w_i\in W_1\cup W_2$.  By Lemma~\ref{block-3}, $X_1, W_2$ are isolated blocks, and $W_1=\cup_{i=0}^r W_1^i$, where $x\succ W_1^0$, and $W_1^i$ for $i>0$ is an isolated block or a block with head $w_1^i$.    We flip the spins of $p_0$ and $w_b$, and then use an odd-even routing algorithm with odd edge $x_kw_1$ to route the permutation.

Now by Lemma~\ref{flip-spins} and Lemma~\ref{block-3}, the order relations under the new disbursement are  $$X_1\cup \{w_b\}\succ W_1\cup (W_2-w_b)\cup \{p_0\},\quad  x\succ (W_2-w_b)\cup W_1^0\cup\{p_0\}, \quad  w_1^i\succ W_1^i-w_1^i \text{ for some $i$}.$$

We list the windows for the pebbles in $X_1\cup \{w_b, x, w_1^i: 1\le i\le r\}$:
\begin{align*}
win(x_i)&=(x_i, X_1-\{x_j: 1\le j\le i\}, W_1, x, W_2-w_b, p_0), \\
win(w_1^i)&=(w_b, p_0, X_1, \cup_{j=0}^{i-1}W_1^j, w_1^i, W_1^i-w_1^i),\\
win(x)&=(x, W_2-w_b, w_b, p_0, X_1, W_1^0),\\
win(w_b)&=(w_b, p_0, X_1, W_1, x, W_2-w_b).
\end{align*}

Then $|win(x_i)|=n-(i-1), |win(w_1^i)|\le n-1-|W_2|\le n-2,  |win(x)|=n-|W_1-W_1^0|$ and $|win(w_b)|=n$, and none of the windows is among the extreme windows in Lemma~\ref{extremal-windows}.  Therefore, in at most $n-2$ steps, the pebbles in $X_1\cup \{w_b, x, w_1^i: 1\le i\le r\}$ will be in place.    However, each order relation on $\pi$ involves one of the pebbles, so by Lemma~\ref{incomparable}, after $n-2$ steps, there are no comparable pebbles, that is, $\pi$ is routed.
 \end{proof}

\section*{Acknowledgement}
 The authors are in debt with an anonymous referee for the valuable comments.




\begin{thebibliography}{1}

\bibitem{ALSY11}
Chase Albert, Chi-Kwong Li, Gilbert Strang, and Gexin Yu.
\newblock Permutations as product of parallel transpositions.
\newblock {\em SIAM J. Discrete Math}, 25(3):1412--1417, 2011.

\bibitem{ACG94}
N.~Alon, F.~R.~K. Chung, and R.~L. Graham.
\newblock Routing permutations on graphs via matchings.
\newblock {\em SIAM J. Discrete Math}, 7(3):513--530, August 1994.

\bibitem{FLRTV09}
Guillaume Fertin, Anthony Labarre, Irena Rusu, Eric Tannier, and Stphane Vialette. {\em Combinatorics of Genome Rearrangements.} The MIT Press, 1st edition, 2009.

\bibitem{K98}
Donald~E. Knuth.
\newblock {\em The Art of Computer Programming, Volume 3: (2nd ed.) Sorting and
  Searching}.
\newblock Addison Wesley Longman Publishing Co., Inc., Redwood City, CA, USA,
  1998.

\bibitem{LLY10}
Wei-Tian Li, Linyuan Lu, and Yiting Yang.
\newblock Routing numbers of cycles, complete bipartite graphs, and hypercubes.
\newblock {\em SIAM J. Discrete Math}, 24(4):1482--1494, 2010.

\bibitem{Zheng99}
L.~Zhang.
\newblock Optimal bounds for matching routing on trees.
\newblock {\em SIAM J. Discrete Math}, 12:64--77, 1999.

\end{thebibliography}
\end{document}